\theoremstyle{plain}
\newtheorem{theorem}{Theorem}
\newtheorem{lemma}[theorem]{Lemma}
\newtheorem{proposition}[theorem]{Proposition}
\newtheorem{conjecture}[theorem]{Conjecture}
\newtheorem{problem}[theorem]{Problem}
\theoremstyle{definition}
\newtheorem{remark}[theorem]{Remark}
\newcommand{\BB}{\mathbf{B}}
\newcommand{\spinc}{\text{Spin}^{c}}
\newcommand{\es}{\sigma_{e}}
\newcommand{\lang}{\langle}
\newcommand{\rang}{\rangle}
\newcommand{\im}{\mathrm{Im}}
\newcommand{\w}{\wedge}
\newcommand{\pa}{\partial}
\newcommand{\al}{\alpha}
\newcommand{\be}{\beta}
\newcommand{\om}{\omega}
\newcommand{\Om}{\Omega}
\newcommand{\vp}{\varphi}
\newcommand{\bB}{\mathbb{B}}
\newcommand{\bC}{\mathbb{C}}
\newcommand{\bN}{\mathbb{N}}
\newcommand{\bR}{\mathbb{R}}
\newcommand{\bS}{\mathbb{S}}
\newcommand{\bZ}{\mathbb{Z}}
\newcommand{\cA}{\mathcal{A}}
\newcommand{\cH}{\mathcal{H}}
\newcommand{\cM}{\mathcal{M}}
\newcommand{\cN}{\mathcal{N}}
\newcommand{\cQ}{\mathcal{Q}}
\newcommand{\fb}{\mathfrak{b}}
\newcommand{\fj}{\mathfrak{j}}
\newcommand{\fK}{\mathfrak{K}}
\newcommand{\fn}{\mathfrak{n}}
\newcommand{\fs}{\mathfrak{s}}
\newcommand{\fT}{\mathfrak{T}}
\newcommand{\ra}{\rightarrow}
\newcommand{\sub}{\subseteq}
\begin{document}
\title{An index theorem for quotients of Bergman spaces on egg domains}

\author{Mohammad Jabbari}
\address{Mohammad Jabbari, Centro de Investigacion en Matematicas, A.P. 402, Guanajuato, Gto., C.P. 36000, Mexico}
\email{mohammad.jabbari@cimat.mx}

\author{Xiang Tang}
\address{Xiang Tang, Department of Mathematics and Statistics, Washington University in St. Louis, St. Louis, Missouri, 63130, USA}
\email{xtang@wustl.edu}

\maketitle

\begin{center}
Dedicated to the memory of Ronald G. Douglas
\end{center}

\begin{abstract}
In this paper we prove a $K$-homology index theorem for the Toeplitz operators obtained from the multishifts of the Bergman space on several classes of egg-like domains.
This generalizes our theorem with Douglas and Yu on the unit ball  \cite{djty}.
\end{abstract}


\section{Introduction}\label{introduction}
Around a decade ago a multivariate operator theory approach to algebraic geometry was suggested by Arveson and Douglas in the following way \cite{arveson-standard,douglas-index}.
Suppose that $I\sub A:=\bC[z_1,\ldots,z_m]$ is an ideal of the ring of polynomials in $m$ variables.
To understand the geometry of the zero variety $V(I):=\{p\in\bC^m:f(p)=0,\forall f\in I \}$ defined by $I$, algebraic geometers study the coordinate ring $A/I$.
To find an operator theory model for $A/I$, one can replace $A$ by the Bergman space $L^2_a(\Om)$ of square-integrable analytic functions on some bounded strongly pseudoconvex domain $\Om\sub\bC^m$ with smooth boundary, and mod it out by the closure $\overline{I}$ of $I$ inside $L^2_a(\Om)$.
The quotient Hilbert space $\cQ_{I}:=L^2_a(\Om)/\overline{I}$ has a natural Hilbert $A$-module structure\footnote{
There is a one-to-one correspondence between commuting $m$-tuples of operators $T:=(T_1,\ldots,T_m)$ acting on a Hilbert space $\cH$ and Hilbert $A$-module structures on $\cH$ \cite{arveson-standard}. 
The correspondence is given by representing each polynomial $p(z_1,\ldots,z_m)\in A$ by the operator $p(T_1,\ldots,T_m)$. 
Conversely, $T$ is identified with the $m$-tuple $(M_{z_1},\ldots,M_{z_m})$ of multiplication operators by coordinate functions, 
and is called the fundamental tuple of Toeplitz operators on the Hilbert $A$-module $\cH$.
Based on this correspondence, the properties of $T$ are attributed to $\cH$ and vice versa. 
For example,  $\cH$ is called essentially normal (respectively $p$-essentially normal) if all $[T_j,T_k^{\ast}]$ are compact (respectively Schatten $p$-summable). Also, $\es(\cH)$ denotes the essential Taylor spectrum associated to the fundamental tuple of Toeplitz operators of $\cH$ \cite{taylor,muller}.} given by $p\cdot(f+\overline{I})=pf+\overline{I}$, $p\in A$, $f\in L^2_a(\Om)$.
Transporting this action to the orthogonal complement
\[
L^2_a(\Om)\ominus\overline{I}=I^{\perp}\cong\cQ_I,
\]
makes $I^{\perp}$ a Hilbert $A$-module.\label{hilbertmodule}
Alternatively, the module structure of $I^\perp$ is given by the compression of multiplication operators:
\begin{equation*}
T_p:=P_{I^{\perp}}M_p|_{I^{\perp}},\quad p\in A,\label{cs}
\end{equation*}
where $M_p:L^2_a(\Om)\ra L^2_a(\Om)$ is the multiplication by $p$, and $P_{I^{\perp}}$ is the orthogonal projection in $L^2_a(\Om)$ onto $I^\perp$.
Let $\fT_{I}$ be the unital C*-algebra generated by $\{T_p:p\in A\}\cup\fK$, where $\fK$ is the ideal of compact operators on $I^{\perp}$.
Arveson, based on his work on the model theory of spherical contractions in multivariate dilation theory \cite{arveson-dilation},\cite[Chapters 40--41]{alpay}, conjectured that \cite{arveson-dirac,arveson-conjecture}:

\begin{conjecture}[Arveson]\label{conjecture-a}
$I^{\perp}$ is essentially normal.
In other words, all commutators $[T_{z_j},T_{z_k}^{*}]$, $j,k=1,\ldots,m$ are compact.
\end{conjecture}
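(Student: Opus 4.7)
The plan is to reduce compactness of the compressed commutators $[T_{z_j},T_{z_k}^{*}]$ to two independent statements about the ambient Bergman space. Substituting $T_{p}=P_{I^{\perp}}M_{p}P_{I^{\perp}}$ and inserting $P_{I^{\perp}}=I-P_{\overline{I}}$ between the two factors one obtains the algebraic identity
\[
[T_{z_j},T_{z_k}^{*}]=P_{I^{\perp}}[M_{z_j},M_{z_k}^{*}]P_{I^{\perp}}-P_{I^{\perp}}M_{z_j}P_{\overline{I}}M_{z_k}^{*}P_{I^{\perp}}+P_{I^{\perp}}M_{z_k}^{*}P_{\overline{I}}M_{z_j}P_{I^{\perp}}.
\]
It therefore suffices to establish (i) the ambient commutator $[M_{z_j},M_{z_k}^{*}]$ is compact on $L^{2}_{a}(\Om)$, and (ii) the ``defect'' $P_{I^{\perp}}M_{z_j}P_{\overline{I}}$ is compact for every $j$, which is equivalent to compactness of $[P_{I^{\perp}},M_{z_j}]$.

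Part (i) is classical: for a bounded smooth strongly pseudoconvex $\Om$ the Fefferman and Boutet de Monvel--Sj\"ostrand diagonal asymptotics of the Bergman kernel put $[M_{z_j},M_{z_k}^{*}]$ in the Schatten class $\cL^{p}$ for every $p>m$. The whole content of the conjecture is then concentrated in part (ii): one must show that $\overline{I}$ is asymptotically invariant under coordinate multiplication. My plan would be to treat this in stages. For a \emph{principal} ideal $I=(p)$, a Skoda-type $\overline{\pa}$-division theorem on $\Om$ produces, for each $f\in L^{2}_{a}(\Om)$, a decomposition $f=pg+r$ with $r\in I^{\perp}$ and quantitative $L^{2}$-bounds on $g,r$; this directly controls the Schwartz kernel of $[P_{\overline{I}},M_{z_j}]$. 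For a \emph{homogeneous} $I$, decompose $L^{2}_{a}(\Om)=\bigoplus_{k}H_{k}$ into degree components, observe that $P_{\overline{I}}$ is block diagonal, and show that the off-diagonal blocks of $[P_{\overline{I}},M_{z_j}]$ have operator norms decaying in $k$. A general $I$ would then be reduced to these two cases via primary decomposition together with a commutant-lifting argument on the associated short exact sequences of quotient modules.

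The hard part is that Arveson's conjecture is open in full generality, and the analytic difficulty concentrates precisely at the singular locus of $V(I)$, where both the $L^{2}$ division estimates and the Bergman asymptotics for $P_{\overline{I}}$ lose uniformity and $P_{\overline{I}}$ ceases to have a well-behaved principal symbol. On egg domains there is an additional geometric obstacle: the residual $\bT^{m}$-symmetry is much weaker than the $U(m)$-symmetry of the ball, so the Bergman kernel has no closed-form reproducing formula and monomial diagonalization is only partial. The work-around I would attempt is to decompose $L^{2}_{a}(\Om)$ into $\bT^{m}$-weight spaces --- which \emph{are} mutually orthogonal for the natural egg measure --- and run the ball-case analysis fiberwise, with the weighted Bergman kernels controlled by complex-geometric estimates near the weakly pseudoconvex ``flat'' boundary directions of the egg.
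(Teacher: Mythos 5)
The statement you are trying to prove is stated in the paper as a \emph{conjecture}, and the paper does not prove it in this generality; it proves only the special case where $\Om$ is an egg domain of type (\ref{egg}) or (\ref{egg2}) and $I$ is a monomial ideal (Theorem \ref{theorem2}), and it does so by a completely different mechanism: an explicit finite resolution of $\overline{I}$ by ``box'' Hilbert modules $\cH^{\fb}_{\fj}$, whose essential normality is verified by direct computation with the orthogonal monomial basis, the closed formula for $\om_1(\al)$ in Proposition \ref{lemma2}, and Stirling asymptotics for ratios of Gamma functions. Your opening reduction is sound and is in fact the paper's Proposition \ref{fact}.(a): modulo essential normality of the ambient module, compactness of all $[T_{z_j},T_{z_k}^*]$ is equivalent to compactness of all $[M_{z_j},P_{\overline{I}}]$. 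But from there the proposal has genuine gaps.

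First, your part (i) invokes Fefferman and Boutet de Monvel--Sj\"ostrand asymptotics, which require strong pseudoconvexity; the egg domains here are only weakly pseudoconvex whenever some exponent exceeds $1$, which is precisely the case of interest, so that citation does not cover the domains in question (the paper instead proves essential normality of $L^2_a(\Om_1)$ by hand in Lemma \ref{prop:ess-norm-box}). Second, and more seriously, your part (ii) is not an argument but a restatement of the conjecture: asserting that the off-diagonal blocks of $[P_{\overline{I}},M_{z_j}]$ decay for homogeneous $I$ \emph{is} the open problem, and no mechanism is offered for why they should decay. The Skoda-division route for principal ideals is a known successful strategy (it is essentially how the principal case was settled in the literature the paper cites), but the proposed reduction of a general ideal to principal and homogeneous ones ``via primary decomposition together with a commutant-lifting argument'' is not a known or obviously workable reduction --- essential normality is not known to pass through primary decompositions, and the associated exact sequences of quotient modules need closed ranges and compatible module maps that primary decomposition does not supply. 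You correctly flag that the conjecture is open in general, so what you have written is a research plan whose central step is equivalent to the unsolved problem, not a proof; if your goal is to match what this paper actually establishes, you should restrict to monomial ideals and exploit, as the paper does, the fact that monomials form an orthogonal basis of $L^2_a(\Om)$ so that $P_{\overline{I}}$ is diagonal in that basis and $[M_{z_j},P_{\overline{I}}]$ can be computed explicitly.
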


Suppose momentarily that this conjecture holds. 
Also, assume that $I$ is homogeneous.
Then the maximal ideal space of $\fT_I/\fK$ is homeomorphic via the mapping $\vp\mapsto \left(\vp(T_{z_1}),\ldots,\vp(T_{z_m})\right)$ to the essential Taylor spectrum of $\left(T_{z_1},\ldots,T_{z_m}\right)$, which is itself contained in $X_I:=V(I)\cap\pa\Om$ \cite[Theorem 4.1]{douglas-index}. 
(Also see \cite[Corollary 3.10]{curto},\cite{grs2},\cite[Theorem 5.1]{gw}.)
The Gelfand-Naimark duality then gives the following short exact sequence of C*-algebras:
\begin{equation*}
0\ra \fK\hookrightarrow \fT_I\ra C(X_I)\ra 0.\label{extension}
\end{equation*}
Let 
\[
\tau_I:=[\fT_I]
\]
be the equivalence class represented by this exact sequence in the odd $K$-homology group $K_1\left(X_I\right)$ of Brown-Douglas-Fillmore \cite{bdf1,bdf2}.
Douglas \cite{douglas-index} (see also \cite[Section 25]{bd}) asked for an explicit computation of this element in other topological or geometric realizations of $K$-homology:

\begin{problem}[Douglas]\label{problem-d}
Assume that $I$ is homogeneous and $I^{\perp}$ is essentially normal. 
	Identify $\tau_I\in K_1\left(X_I\right)$.
\end{problem} 

More specifically, in the same paper he conjectured that:

\begin{conjecture}[Douglas]\label{conjecture-d}
Let $I$ be the vanishing ideal of an algebraic variety $V\sub\bC^{m}$ which intersects $\pa\Om$ transversally. 
Then $I^{\perp}$ is essentially normal, and its induced extension class $\tau_I$ is identified with the fundamental class of $X_I$, namely the extension class induced by the $\spinc$ Dirac operator associated to the natural Cauchy-Riemann structure of $X_I$.
\end{conjecture}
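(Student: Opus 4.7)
The strategy naturally splits into two parts: establishing essential normality of $I^{\perp}$ (Conjecture~\ref{conjecture-a}), and then identifying the extension class $\tau_I$ with the $\spinc$ Dirac class on $X_I$. I would treat these in sequence, using the first as input for the second.

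For essential normality, the plan is to exploit homogeneity of $I$ together with transversality of $V(I)$ to $\pa\Om$. Since $I$ is graded, $\cQ_I$ decomposes as an orthogonal direct sum of finite-dimensional homogeneous pieces, which lets one reduce commutator estimates to asymptotics of finite-rank operators in large degree. The commutators $[T_{z_j},T_{z_k}^{\ast}]$ on $I^{\perp}$ can be written in terms of the difference of the reproducing kernels of $L^2_a(\Om)$ and $\cQ_I$, and under transversality this difference is controlled by the geometry of $V(I)\cap\Om$ near $X_I$. Adapted weighted sub-elliptic estimates, replacing the usual $\bar\pa$-Neumann estimates that fail at weakly pseudoconvex points in the egg-domain setting, should then deliver the required Schatten bounds.

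For the $K$-homology identification I would construct an explicit homotopy in $\mathrm{Ext}(X_I)$ from $\tau_I$ to the Dirac class. One convenient route is to pair both extensions with generators of $K^{1}(X_I)$: for each unitary $u\in C(X_I)$ one computes the index of the Toeplitz operator $T_u$ on $\cQ_I$ by a Berezin-transform or heat-kernel asymptotic expansion on $\Om$, and matches the resulting boundary integral with the Atiyah--Singer formula for the CR Dirac operator on $X_I$ twisted by $u$. A natural intermediate case is $I=(0)$, where $X_I=\pa\Om$ and the identification reduces to Boutet de Monvel's theorem on the Szeg\H{o} extension; the general case should then follow from a restriction argument from $\Om$ to $V(I)\cap\Om$, since $\cQ_I$ is expected to be quasi-unitarily equivalent to a Bergman-type space on $V(I)\cap\Om$ whose boundary is precisely $X_I$.

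The main obstacle, and the place where the egg-domain setting departs most sharply from the unit-ball case of \cite{djty}, is the behavior near singular points of the boundary. In the ball, full $\mathrm{SU}(m)$-equivariance reduces many estimates to a single fiber and handles the CR singularities of $X_I$ uniformly; on an egg domain only a torus $\bT^{m}$ acts by symmetries, so the corresponding arguments must be carried out fiberwise over torus orbits, and one must separately verify that the weakly pseudoconvex strata of $\pa\Om$ contribute nothing to the $K$-homology class. Controlling these boundary contributions, and making sure the deformation to the Dirac class stays inside the correct extension semigroup throughout, is where I expect the bulk of the delicate analysis to sit.
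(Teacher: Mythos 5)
You should first note that the statement you were asked about is Conjecture \ref{conjecture-d}, which this paper does \emph{not} prove: it is recorded as an open conjecture of Douglas, and the authors' actual contribution (Theorem \ref{theorem2}) addresses the different and more modest Problem \ref{problem-d} for \emph{monomial} ideals on egg domains, identifying $\tau_I$ not with the $\spinc$ Dirac fundamental class of $X_I$ but with an alternating sum $\sum_q(-1)^{q-1}[\fT(\cA_q)]$ coming from an explicit resolution by Hilbert modules built from boxes of monomial exponents. Monomial ideals are generically non-radical, so they do not even fall under the hypothesis of Conjecture \ref{conjecture-d}; conversely, the paper's resolution technique depends entirely on monomials forming an orthogonal basis and does not touch vanishing ideals of general transversal varieties. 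So there is no ``paper's own proof'' to compare against, and your text must be judged as a standalone argument.

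Judged that way, it is a research program rather than a proof, and every load-bearing step is asserted rather than established. The claim that transversality plus ``adapted weighted sub-elliptic estimates'' yields compactness (let alone Schatten membership) of $[T_{z_j},T_{z_k}^\ast]$ is precisely the open content of Conjectures \ref{conjecture-a} and \ref{conjecture-d}; no estimate is produced, and the known partial results (principal ideals, smooth homogeneous varieties, $m\le 3$, stable generating sets) each require substantial machinery that your sketch does not supply. The assertion that $\cQ_I$ is ``expected to be quasi-unitarily equivalent to a Bergman-type space on $V(I)\cap\Om$'' is essentially the theorem of Engli\v s--Eschmeier and Douglas--Tang--Yu in the smooth case and is open for singular varieties, which is exactly the case the conjecture is interesting for. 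Finally, the proposed identification of $\tau_I$ with the Dirac class by pairing with generators of $K^1(X_I)$ only determines the class modulo the kernel of the index pairing; since $K_1(X_I)$ can have torsion and the pairing need not be faithful for a singular CR space $X_I$, index computations alone cannot identify the extension class, and no homotopy in $\mathrm{Ext}(X_I)$ is actually constructed. Each of these three gaps is fatal on its own; together they mean the proposal does not prove the statement, which indeed remains open.
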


By analogy with the Atiyah-Singer index theorem, one expects that this conjecture would lead to new connections between geometry and operator theory.
To see what brought Arveson and Douglas to their conjecture/problem, we refer the reader to their original papers \cite{arveson-conjecture,arveson-curvature,douglas-index} as well as \cite[Chapter 41]{alpay},\cite[Sections 1.2--3]{jabbari-thesis}. 
Specially, Conjecture \ref{conjecture-d} generalizes some aspects of the Boutet de Monvel index theorem for Toeplitz operators on strongly pseudoconvex domains to possibly singular algebraic varieties \cite{b,bdt}.

Let us review some results about these conjectures and problem. 
(See also \cite[Chapter 41]{alpay},\cite{gw-survey}.)
When $\Om$ is the unit open ball, Conjecture \ref{conjecture-a} has been proved for the following cases:
\begin{enumerate}
	\item
	$I$ is monomial \cite{arveson-conjecture, douglas-monomial,djty}.
	
		\item 
		$I$ is homogeneous and	$m\leq 3$ \cite{gw}.

		\item
		$I$ is homogeneous and $\dim V(I)\leq 1$ \cite{gw}.

	\item
	$I$ is principal \cite{gw, dw-harmonic, fx, fx2, dgw, fx3}.
	(The last two references allow for strongly pseudoconvex domains $\Om$.)

	\item
	$I$ has a stable generating set $\{p_1,\ldots,p_k\}$ of homogeneous polynomials in the sense that there exists $C>0$ such that every $q\in I$ can be written as $q=\sum_{j=1}^k r_jp_j$ with $r_j\in A$ and $\|r_jp_j\|_{L^2(\Om)}\leq C\|q\|_{L^2(\Om)}$ \cite{shalit,wang-unified}.

	\item
	$I$ is the vanishing ideal of a homogeneous variety smooth away from the origin \cite{ee,dty, dw-geometric,wx}.
	
\end{enumerate}

When $\Omega$ is the unit ball, the articles \cite{gw} and \cite{dty} answer Problem \ref{problem-d} respectively when $m\leq 2$ and when $I$ is the vanishing ideal of a complete intersection variety (possibly singular away from the boundary).
In \cite{djty} we gave an answer to Problem \ref{problem-d} when $\Om$ is the unit open ball and $I$ is monomial:

\begin{theorem}\label{theorem1}
Let $\Omega$ be the unit open ball $\bB_m$, and $I$ be a monomial ideal.

(a)
There exist a positive integer $k$, essentially normal Hilbert $A$-modules $\cA_0:=L^2_a(\Om)$, $\cA_1,\ldots,\cA_{k}$, and Hilbert $A$-module morphisms\footnote{Bounded linear maps that preserve $A$-module structures.} $\Psi_q:\cA_q\ra\cA_{q+1}$, $q=0,\ldots,k-1$ such that
\begin{equation*}
0\ra \overline{I}\hookrightarrow \cA_0\stackrel{\Psi_0}{\ra} \cA_1\stackrel{\Psi_1}{\ra}\cdots \stackrel{\Psi_{k-1}}{\ra }\cA_k\ra 0\label{resolution}
\end{equation*}
is exact.
(This implies that $I^\perp$ is essentially normal.)

(b)
For each $q$, let $\fT(\cA_q)$ be the unital C*-algebra generated by all module action operators as well as all compact operators on the Hilbert module $\cA_q$, and let $\es^q:=\es^q(\cA_q)$ be the essential Taylor spectrum associated to
$\cA_q$.
Then the identification 
\begin{equation*}
\tau_I=\sum_{q=1}^{k}(-1)^{q-1} [\fT(\cA_q)]\label{equ}
\end{equation*}
holds in $K_1\left(\es^1\cup\cdots\cup\es^k\right)$. \end{theorem}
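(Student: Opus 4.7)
My strategy is to exploit the combinatorial structure of the monomial ideal $I$ to build, in the category of Hilbert $A$-modules, a finite exact sequence of essentially normal modules co-resolving $I^{\perp}$, and then to extract $\tau_I$ from it by an Euler-characteristic argument in odd $K$-homology.

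For part (a), let $\{m_1, \ldots, m_N\}$ be the minimal monomial generators of $I$ and, for each nonempty $S \sub \{1, \ldots, N\}$, set $m_S := \operatorname{lcm}(m_i : i \in S)$. I would define each $\cA_q$ (for $q \geq 1$) as a direct sum, indexed by subsets $S$ of appropriate cardinality, of principal monomial Hilbert submodules of $L^2_a(\bB_m)$ built from $\overline{m_S \cdot A}$, with differentials $\Psi_q$ assembled as Koszul-type alternating sums of multiplication operators by the monomials $m_{S \cup \{j\}}/m_S$. Each such summand is unitarily equivalent to a principal monomial submodule of the Bergman space on the ball, which is essentially normal by the principal case of Arveson's conjecture \cite{arveson-conjecture,douglas-monomial}; hence each $\cA_q$ is essentially normal. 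Algebraic exactness reduces to the exactness of a standard Taylor/\v{C}ech complex of the monomial ideal, and the passage to norm exactness uses density of polynomials in each summand together with a closed-range argument.

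For part (b), the exact sequence from (a) provides a co-resolution of $I^{\perp} = \cA_0/\overline{I}$ by the $\cA_q$ for $q \geq 1$. Iterating the six-term $K$-homology exact sequences associated to the short-exact truncations $0 \ra C_{q-1} \ra \cA_q \ra C_q \ra 0$, with $C_q := \operatorname{im}\Psi_q$ and $C_0 = I^{\perp}$, telescopes into
\[
\tau_I \;=\; [\fT(I^{\perp})] \;=\; \sum_{q=1}^{k} (-1)^{q-1}\, [\fT(\cA_q)]
\]
in $K_1(\es^1 \cup \cdots \cup \es^k)$. Note that the initial term $\cA_0 = L^2_a(\bB_m)$ does not appear explicitly in the final formula; it serves only to embed $I^{\perp}$ as a subquotient at the start of the co-resolution.

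The main obstacle is establishing norm exactness of the Hilbertian resolution and uniform essential normality of the direct sums $\cA_q$ simultaneously. While algebraic exactness of the Taylor complex and essential normality of a single principal monomial summand are classical, controlling closed range of the Koszul differentials in operator norm, together with compactness of commutators on the direct sums, requires sharp Bergman-kernel estimates specific to $\bB_m$; this is the technical heart of the argument in \cite{djty}. Once those estimates are in place, the $K$-homology identification in (b) follows by a purely formal Euler-characteristic chase along the resolution.
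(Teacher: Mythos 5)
There is a genuine gap in part (a): your complex points the wrong way. You build $\cA_q$ out of \emph{submodules} $\overline{m_S\cdot A}\subseteq L^2_a(\bB_m)$ with Taylor/Koszul differentials given by multiplication by $m_{S\cup\{j\}}/m_S$. Such maps increase $|S|$, so $\Psi_0\colon L^2_a(\bB_m)\to\bigoplus_i\overline{m_iA}$ would be $f\mapsto(m_1f,\ldots,m_Nf)$, which is injective (multiplication by a nonzero monomial is injective on holomorphic functions); its kernel cannot be $\overline{I}$. If you instead take $\Psi_0$ to be the tuple of orthogonal projections onto the $\overline{m_iA}$, then $\ker\Psi_0=\bigcap_i(\overline{m_iA})^{\perp}=(\overline{\sum_i m_iA})^{\perp}=I^{\perp}$, again not $\overline{I}$. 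The Taylor complex is a \emph{left} resolution $\cdots\to\bigoplus_{|S|=2}\overline{m_SA}\to\bigoplus_i\overline{m_iA}\to\overline{I}\to 0$; it does not produce the sequence $0\to\overline{I}\hookrightarrow L^2_a(\Om)\to\cA_1\to\cdots\to\cA_k\to 0$ demanded by the theorem, and no dualization step is supplied. The paper's construction is on the other side of the orthogonal decomposition: one covers the set $\mathsf{C}(I)$ of exponents of monomials \emph{not} in $I$ by finitely many ``boxes,'' associates to each box (and to each intersection of boxes) the \emph{quotient} module $L^2_a(\Om)\ominus\overline{\langle z_{j^1}^{b^1+1},\ldots,z_{j^q}^{b^q+1}\rangle}$, and takes the \v{C}ech/Mayer--Vietoris complex of this cover; then $\ker\Psi_0=\overline{I}$ exactly because the boxes cover $\mathsf{C}(I)$, and exactness is proved by induction on the number of boxes using the orthogonal monomial basis. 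Essential normality of each summand is likewise verified directly from asymptotics of the monomial norms (not by citing the principal-ideal case, which concerns a different class of modules).

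Your part (b) is the right idea in spirit and matches the paper's argument --- splitting the resolution into short exact sequences $0\to\ker\Psi_q\to\cA_q\to\operatorname{im}\Psi_q\to 0$ and telescoping --- but note that the additivity $[\fT(\cM_2)]=[\fT(\cM_1)]+[\fT(\cM_3)]$ for a short exact sequence of essentially normal modules is not a formal consequence of a six-term sequence: the paper needs the co-isometry statement (Proposition \ref{prop:isometry}) and the functorial push-forwards into $K_1(\es^1\cup\cdots\cup\es^k)$, since the three classes a priori live over different essential spectra. In any case, (b) presupposes the co-resolution of (a), so the directional error above must be fixed first.
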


By its explicit construction, each $\cA_q$ has a tractable geometry as the Hilbert space of square-integrable analytic sections of a Hermitian vector bundle on a disjoint union of subsets of $\bB_m$.

In this paper, we generalize Theorem \ref{theorem1} to the case when $\Omega$ is an egg domain of the form
\begin{equation}
\Om_1:=\left\{\left(z_1,\ldots,z_m\right)\in\bC^m:\sum\limits_{j=1}^{m}\left|z_j\right|^{2p_j}<1\right\},\quad p_j>0,\label{egg}
\end{equation}
or more generally of the form
\begin{equation}
\Om_2:=\left\{\left(\sum\limits_{j=1}^{m}\left|z_j\right|^{2p_j}\right)^{a}+\left(\sum\limits_{k=1}^{n}\left|w_k\right|^{2q_k}\right)^{b}+\cdots<1\right\}\sub\bC^{m+n+\cdots},\label{egg2}
\end{equation}
where the finitely many parameters $p_j, q_k, a,b,\ldots$ are arbitrary positive reals.
(When all $p_j,q_k,\ldots$ equal $1$, $\Om_2$ is called a generalized complex ellipsoid in \cite{kkm}.)


\begin{theorem}\label{theorem2}
Let $\Omega$ be a domain of the form (\ref{egg}) or (\ref{egg2}), and $I$ be a monomial ideal.

(a)
There exist a positive integer $k$, essentially normal Hilbert $A$-modules $\cA_0:=L^2_a(\Om)$, $\cA_1,\ldots,\cA_{k}$, and Hilbert $A$-module morphisms $\Psi_q:\cA_q\ra\cA_{q+1}$, $q=0,\ldots,k-1$ such that
\begin{equation}
0\ra \overline{I}\hookrightarrow \cA_0\stackrel{\Psi_0}{\ra} \cA_1\stackrel{\Psi_1}{\ra}\cdots \stackrel{\Psi_{k-1}}{\ra }\cA_k\ra 0\label{resolution}
\end{equation}
is exact.
(This implies that $I^\perp$ is essentially normal.)

(b)
For each $q$, let $\fT(\cA_q)$ be the unital C*-algebra generated by all module action operators as well as all compact operators on the Hilbert module $\cA_q$, and let $\es^q:=\es^q(\cA_q)$ be the essential Taylor spectrum associated to $\cA_q$.
Then the identification 
\begin{equation*}
\tau_I=\sum_{q=1}^{k}(-1)^{q-1} [\fT(\cA_q)]\label{equ}
\end{equation*}
holds in $K_1\left(\es^1\cup\cdots\cup\es^k\right)$. \end{theorem}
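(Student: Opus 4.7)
The plan is to follow the architecture of the ball proof in \cite{djty} and identify precisely the places where the geometry of the egg domain enters. The central observation is that both $\Om_1$ and $\Om_2$ are invariant under the coordinate-wise torus action, so monomials $\{z^\alpha\}_{\alpha\in\bN^m}$ form a complete orthogonal system in $L^2_a(\Om)$. For $\Om_1$ the squared norm $\|z^\alpha\|^2$ reduces, via polar coordinates in each $z_j$ followed by the substitution $r_j=|z_j|^{2p_j}$, to a Dirichlet--beta integral on the standard simplex, giving an explicit expression in terms of Gamma values of linear functions of $\alpha$ and $p_j$. For $\Om_2$ the same reduction is applied iteratively, one nested layer at a time, yielding an analogous (nested) Gamma-function formula.

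For part (a), the combinatorial decomposition of $\bN^m\setminus\Gamma(I)$, where $\Gamma(I)$ is the exponent set of the monomial ideal $I$, into Koszul-type strata is purely lattice-theoretic and hence identical to \cite{djty}. Each stratum contributes a direct summand of $\cA_q$ whose underlying Hilbert space is the closed linear span of the monomials in that stratum, with $A$-action given by multiplication followed by the appropriate projection. I would define the morphisms $\Psi_q$ as the signed sums of the natural inclusions/projections coming from the Koszul differentials of the stratification; exactness of (\ref{resolution}) is a statement about $\Gamma(I)$ and does not depend on $\Om$. Essential normality on each $\cA_q$ would follow from the asymptotic analysis of the norm ratios $\rho_j(\alpha)^2:=\|z^{\alpha+e_j}\|^2/\|z^\alpha\|^2$: the Gamma-function formulas yield an expansion $\rho_j(\alpha)^2=a_j(\alpha)+O(|\alpha|^{-1-\e})$ as $|\alpha|\to\infty$, where $a_j$ is a degree-zero homogeneous rational function in $\alpha$ encoding the exponents $p_j,a,b,\ldots$. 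This is precisely the input needed to run the ball-case argument: the commutators $[T_{z_j},T_{z_k}^\ast]$ act diagonally in the monomial basis with eigenvalues tending to zero, hence are compact, and analogous estimates control the off-diagonal (between-stratum) pieces.

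For part (b), granted part (a) we have the short exact sequences $0\ra\fK\ra\fT(\cA_q)\ra C(\es^q)\ra 0$, and the alternating-sum identification of $\tau_I$ in $K_1(\es^1\cup\cdots\cup\es^k)$ follows from the same Brown--Douglas--Fillmore machinery used in \cite{djty}: the resolution (\ref{resolution}) is turned into a chain of relative extensions whose alternating sum telescopes to $\tau_I$. No new ingredient is needed here once (a) is in place, since this step uses only the module-theoretic and extension-theoretic structure, not the specific weight.

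The step I expect to be the main obstacle is essential normality in the $\Om_2$ setting: the nested structure of (\ref{egg2}) makes the coefficient $a_j(\alpha)$ depend both on the stratum and on the layer of nesting, so the multi-index bookkeeping required to prove that $\rho_j(\alpha)^2-a_j(\alpha)$ decays at the right rate \emph{uniformly} across strata and layers is genuinely more delicate than on $\bB_m$, where everything collapses to a single radial variable. I expect this to be manageable by organizing the expansion along the nested layers of (\ref{egg2}) and applying the simplex reduction layer by layer, so that each layer contributes an independently controlled error term.
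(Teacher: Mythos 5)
Your proposal follows essentially the same route as the paper: orthogonality of monomials from the torus invariance of the egg domain, explicit Beta/Gamma norm formulas via polar coordinates and the substitution $X_j=|z_j|^{p_j}$ (applied layer by layer for the nested domains), a box/\v{C}ech-type resolution of the complement of the exponent set exactly as in \cite{djty}, essential normality from the asymptotics of the weight ratios, and the BDF telescoping argument for part (b). The only point worth noting is that for $j\neq k$ the commutators $[T_{z_j},T_{z_k}^{\ast}]$ are weighted shifts rather than diagonal operators, which the paper handles by invoking the Fuglede--Putnam theorem to reduce to the self-commutators; your diagonal-eigenvalue argument covers those directly, and the same ratio estimates then finish the job.
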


The explicit construction of the resolution (\ref{resolution}) and the proof of Theorem \ref{theorem2} come in Sections \ref{longexactseq} and \ref{proof}, respectively.
Our proof uses crucially the fact that monomials constitute an orthogonal basis for $L^2_a(\Om)$ if $\Om$ is a domain of type (\ref{egg}) or (\ref{egg2}).
Each $\cA_q$ has a tractable geometry as the Hilbert space of square-integrable analytic sections of a Hermitian vector bundle on a disjoint union of subsets of $\Om$.

\begin{remark}
	When $I$ is homogeneous, the C*-algebra generated by $\{1\}\cup\{T_p:p\in A\}$ is irreducible (it has no proper reducing closed subspace), hence contains $\fK$ if $I^{\perp}$ is essentially normal \cite[Page 923]{gw},\cite[Theorem 5.39]{douglas-bat1}.
\end{remark}

\begin{remark}
One reason why we care about monomial ideals is that a comprehensive understanding of the phenomena appearing in this generically nonradical case may lead to new results beyond the recently established ones about radical ideals \cite{dty,dw-geometric,ee}. 
\end{remark}

\begin{remark}
When all $p_j$ are $\geq 1$ and at least one of them is $>1$, domain (\ref{egg}) is weakly (but not strongly) pseudoconvex and with smooth boundary \cite{dangelo0},\cite[Section 11.5]{krantz}.
(The same is true for domain (\ref{egg2}) if all $a, b,\ldots,2p_j,2q_k,\ldots$ are $\geq 2$ and at least one of $2p_j,2q_k,\ldots$ is $>2$.)
As far as we know, putting the polydiscs aside \cite{wz}, Theorem \ref{theorem2} is the only result which discusses Conjecture \ref{conjecture-a} and Problem \ref{problem-d} on weakly pseudoconvex domains.
\end{remark}

\begin{remark}
Note that a domain of type (\ref{egg2}) is obtained from a domain of type (\ref{egg}) when each $|z_j|$ is replaced by an expression of the form $\sum_{k=1}^{n_j} |z_{jk}|^{p_{jk}}$, where all coordinates $z_{jk}$ are distinct.
Applying this process on a domain of type (\ref{egg2}) and repeating this process finitely many times, give rise to more generalized egg domains. For example, we can get the following one:
\begin{multline*}
\left(\left(|z_{111}|^{p_{111}}+|z_{112}|^{p_{112}}\right)^{p_{11}}+\left(|z_{121}|^{p_{121}}+|z_{122}|^{p_{122}}+|z_{123}|^{p_{123}}\right)^{p_{12}}+|z_{13}|^{p_{13}}\right)^{p_1}+\\
(\cdots)^{p_2}+\cdots<1.
\end{multline*}
The arguments in this paper prove Theorem \ref{theorem2} for all such domains.
\end{remark}

Arveson's statement of his essential normality conjecture was more refined than Conjecture \ref{conjecture-a} in the sense that it addressed the Schatten class membership of commutators \cite{arveson-conjecture,douglas-monomial}.
In this paper, however, we merely focused on the membership of commutators in the ideal of compacts.
The reason is that our proof of Theorem \ref{theorem2}.(b) relies crucially on the usage of the Fuglede-Putnam theorem in the proof of Proposition \ref{fact}.(b,c,d).
Since the Schatten class version of the Fuglede-Putnam theorem is missing \cite{douglas-monomial,shulman}, our result does not determine the Schatten class membership of the commutators for the quotients of Bergman spaces by monomial ideals.
Nevertheless, it is worth pointing out that our computations (not included in this paper) show that the whole Bergman space $L^2_a(\Om_1)$, associated with domain (\ref{egg}), is $p$-essentially normal exactly when $p$ is strictly larger than
\[
\max\left\{
m,
p_j(m-1): j=1,\ldots,m\right\}.
\]
(See also Remark \ref{schatten}.)
This suggests that the Schatten class property of the commutators may be related to the convexity and geometry of the domain \cite{connes,dv,milnor}. 
We plan to discuss this relation in the future \cite{jabbari-schatten}.

\[\]
\noindent
\textbf{Acknowledgments.}
We would like to thank Ronald Douglas, Guoliang Yu and Yi Wang for inspiring discussions.
Both authors are partially supported by NSF grants.

%

\section{The construction of the resolution in Theorem \ref{theorem2}}\label{longexactseq}
From now on, $\Om_1$ and $\Om_2$ are domains of type (\ref{egg}) and (\ref{egg2}), respectively.
We develop the details for $\Om_1$, and $\Om_2$ can be treated similarly, with the only difference being Proposition \ref{lemma2} and the proof of Lemma \ref{prop:ess-norm-box}.
We always use the multi-index notation \cite[Page 3]{krantz}, especially $|\al|$ to stand for the sum of the components of the multi-index $\al$.
$\bN$ denotes the set of nonnegative integers.

\subsection{The monomial orthonormal basis for the Bergman space}\label{onb}
Monomial functions $z^\al$, $\al\in\bN^m$ are orthogonal in $L^2_a(\Om_1)$, as the integration in polar coordinates in each variable shows.
On the other hand, since $\Om_1$ is a complete Reinhardt domain, polynomials are dense in $L^2_a(\Om_1)$ with respect to the topology of uniform convergence on compacts \cite[Page 47]{range}.
Then a standard shrinking argument \cite[Page 43]{zhu-FT},\cite[Page 11]{ds} shows that the normalized monomials
\[
\frac{z^\al}{\sqrt{\om_1(\al)}},\quad
\om_1(\al):=\left\|z^\al\right\|^2_{L^2_a(\Om_1)}
\]
constitute an orthonormal basis for the Hilbert space $L^2_a(\Om_1)$. 
Next, we are going to find an explicit formula for $\om_1(\al)$ as well as $\om_{2}(\al,\be,\ldots)
:=\left\|z^\al w^\be\cdots\right\|^2_{L^2_a(\Om_2)}$.  
In what follows, for a variable $x=(x_1,\ldots,x_m)$ ranging on some part of $\bR^m$, $dx:=dx_1\cdots dx_m$ denotes the Riemannian density of the Euclidean space $\bR^m$. 
The set of positive reals is denoted by $\bR_+$.

\begin{lemma}\label{lemma1}
Given $\al:=(\al_1,\ldots,\al_m)\in\bR_+^m$, we have
	\[
	\int_{x\in\bR^m_+,\ \sum x_j^2<1}x^\al dx=\frac{B\left(\frac{\al+1}{2}\right)}{2^m\left|\frac{\al+1}{2}\right|},
	\]
	\[
	\int_{x\in\bR^m_+,\ \sum x_j^2=1}x^\al d\sigma_m(x)=\frac{B\left(\frac{\al+1}{2}\right)}{2^{m-1}},
	\]
	where $\al+1=(\al_1+1,\ldots,\al_m+1)$, $B((\al+1)/2)=\prod\Gamma((\al_j+1)/2)/\Gamma(\sum(\al_j+1)/2)$ is the multi-variable Beta function, and $d\sigma_m$ is the Riemannian density that $dx$ induces on the unit sphere $\bS^{m-1}\sub\bR^m$.
\end{lemma}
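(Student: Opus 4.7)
The plan is to reduce both integrals to the classical Dirichlet integral
\[
\int_{y\in\bR_+^m,\ \sum y_j<1}\prod_{j=1}^{m} y_j^{\be_j-1}\,dy=\frac{\prod_j\Gamma(\be_j)}{\Gamma\left(\sum_j\be_j+1\right)},
\]
which is standard. For the first (volume) integral, I would make the change of variables $y_j=x_j^2$ on the positive orthant. Since $x_j=\sqrt{y_j}$ and $dx_j=\tfrac{1}{2\sqrt{y_j}}\,dy_j$, the integrand transforms as $x^\al\,dx=2^{-m}\prod_j y_j^{(\al_j-1)/2}\,dy_j$, and the region $\{\sum x_j^2<1\}$ becomes $\{\sum y_j<1\}$. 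Applying the Dirichlet formula with $\be_j=(\al_j+1)/2$ immediately yields
\[
\int_{x\in\bR_+^m,\ \sum x_j^2<1} x^\al\,dx=\frac{1}{2^m}\cdot\frac{\prod_j\Gamma\bigl((\al_j+1)/2\bigr)}{\Gamma\bigl(\lvert(\al+1)/2\rvert+1\bigr)}=\frac{B\bigl((\al+1)/2\bigr)}{2^m\,\lvert(\al+1)/2\rvert},
\]
using $\Gamma(s+1)=s\,\Gamma(s)$ in the denominator.

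For the surface integral, the simplest route is a scaling/differentiation argument. Define
\[
F(r):=\int_{x\in\bR_+^m,\ \sum x_j^2<r^2} x^\al\,dx.
\]
The substitution $x=ry$ gives $F(r)=r^{\lvert\al\rvert+m}\,F(1)$. On the other hand, by the coarea formula,
\[
F'(r)=\int_{x\in\bR_+^m,\ \sum x_j^2=r^2} x^\al\,d\sigma_{m,r}(x),
\]
where $d\sigma_{m,r}$ is the induced surface measure on the sphere of radius $r$; at $r=1$ this is exactly the integral we want. Differentiating the scaling identity at $r=1$ and observing that $\lvert\al\rvert+m=2\,\lvert(\al+1)/2\rvert$ converts the prefactor $(\lvert\al\rvert+m)/(2^m\,\lvert(\al+1)/2\rvert)$ into $1/2^{m-1}$, which gives the claimed formula.

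There is no real obstacle here: the only mildly delicate points are keeping track of the Jacobian $2^{-m}\prod_j y_j^{-1/2}$ and correctly identifying the shift by $+1$ between $\Gamma(\lvert(\al+1)/2\rvert+1)$ and $\lvert(\al+1)/2\rvert\,\Gamma(\lvert(\al+1)/2\rvert)$. As a sanity check (and an alternative derivation if preferred), one can compute the Gaussian integral $\int_{\bR_+^m} x^\al e^{-\sum x_j^2}\,dx$ in two ways: as a product $2^{-m}\prod_j\Gamma((\al_j+1)/2)$, and via the spherical decomposition $\bigl(\tfrac{1}{2}\Gamma(\lvert(\al+1)/2\rvert)\bigr)\cdot\int_{\sum x_j^2=1}x^\al\,d\sigma_m$; the ratio recovers the second formula and cross-checks the first via integration of $F'(r)$ from $0$ to $1$.
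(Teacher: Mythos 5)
Your proof is correct. Note that the paper does not actually prove this lemma: its ``proof'' is a one-line citation to standard references (Andrews--Askey--Roy, Zhu, Folland), so you are supplying a derivation the authors chose to omit. Your argument is sound at every step: the substitution $y_j=x_j^2$ with Jacobian $2^{-m}\prod_j y_j^{-1/2}$ reduces the ball integral to the Dirichlet integral with exponents $\be_j=(\al_j+1)/2$, and the identity $\Gamma(s+1)=s\Gamma(s)$ with $s=\lvert(\al+1)/2\rvert$ produces exactly the stated denominator. The passage to the sphere via $F(r)=r^{\lvert\al\rvert+m}F(1)$ and $F'(1)=\int_{\mathbb{S}^{m-1}\cap\bR_+^m}x^\al\,d\sigma_m$ is the standard coarea/polar-coordinates fact, and the bookkeeping $\lvert\al\rvert+m=2\lvert(\al+1)/2\rvert$ correctly turns $2^{-m}$ into $2^{-(m-1)}$. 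Your Gaussian cross-check is also the argument the cited references essentially use (compute $\int_{\bR_+^m}x^\al e^{-\lvert x\rvert^2}dx$ as a product of one-dimensional Gamma integrals and again by polar decomposition), so your proposal both matches the spirit of the standard proofs and makes the lemma self-contained. No gaps.
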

\begin{proof}
These are standard facts \cite[Section 1.8]{andrews},\cite[Page 13]{zhu-FT},\cite[Page 80]{folland}.
\end{proof}

\begin{proposition}\label{lemma2}
(a)
Given multi-index $\al\in\bN^m$, we have
\[
\om_1(\al)
=\left\|z^\al\right\|^2_{L^2_a(\Om_1)}
=\frac{\pi^m}{\prod p_j}\frac{B\left(\frac{\al+1}{p}\right)}{\left|\frac{\al+1}{p}\right|},
\]
where $\frac{\al+1}{p}:=\left(\frac{\al_1+1}{p_1},\ldots,\frac{\al_m+1}{p_m}\right)$.

(b)
Given multi-indices $\al\in\bN^m, \be\in\bN^n, \ldots$, we have
\begin{multline*}
\om_{2}(\al,\be,\ldots)
=\left\|z^\al w^\be\cdots\right\|^2_{L^2_a(\Om_2)}=\\
\frac{\pi^{m+n+\cdots}}{\prod p_j\prod q_k\cdots}
\frac{1}{ab\cdots}
B\left(\frac{\al+1}{p}\right)
B\left(\frac{\be+1}{q}\right)\cdots
\frac{B\left(\left|\frac{\al+1}{ap}\right|,\left|\frac{\be+1}{bq}\right|,\ldots\right)}{\left|\frac{\al+1}{ap}\right|+\left|\frac{\be+1}{bq}\right|+\cdots}.
\end{multline*}
\end{proposition}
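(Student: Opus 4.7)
The plan is to reduce both computations to the real-variable integrals in Lemma~\ref{lemma1} by integrating out the phases in polar coordinates and then performing a power substitution that converts the defining inequality of $\Om_1$ (resp.\ each block of $\Om_2$) into the canonical inequality $\sum x_j^2<1$.

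For part~(a), I would write $z_j=r_j e^{i\theta_j}$, so that $|z^\al|^2=r^{2\al}$ and $dV=\prod r_j\,dr_j\,d\theta_j$. The $\theta_j$-integrations contribute a factor $(2\pi)^m$, leaving
\[
\om_1(\al)=(2\pi)^m\int_{r_j>0,\ \sum r_j^{2p_j}<1}\prod_{j=1}^m r_j^{2\al_j+1}\,dr_j.
\]
The substitution $x_j:=r_j^{p_j}$, whose Jacobian contributes $\prod(1/p_j)\,x_j^{1/p_j-1}$, turns the constraint into $\sum x_j^2<1$ and the exponent of $x_j$ into $\tilde\al_j:=2(\al_j+1)/p_j-1$. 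Applying the first formula of Lemma~\ref{lemma1} (noting $(\tilde\al_j+1)/2=(\al_j+1)/p_j$) and simplifying $(2\pi)^m/(2^m\prod p_j)=\pi^m/\prod p_j$ yields the stated formula.

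For part~(b), I would first apply the same polar-plus-power change of variables in each block of coordinates, producing real integrals with weights $x^{\tilde\al}y^{\tilde\be}\cdots$ over the region $(\sum x_j^2)^a+(\sum y_k^2)^b+\cdots<1$. The crucial step is now to introduce spherical coordinates \emph{inside each block}: $x=R\xi$, $y=S\eta$, \ldots\ with $\xi\in\bS^{m-1}_+$, $\eta\in\bS^{n-1}_+$, etc. This cleanly separates angular and radial variables. Each angular factor $\int\xi^{\tilde\al}\,d\sigma_m$ matches the second formula of Lemma~\ref{lemma1} and produces $B((\al+1)/p)/2^{m-1}$, using the identity $|\tilde\al|+m=2|(\al+1)/p|$. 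The remaining radial integral reads
\[
\int_{R,S,\ldots>0,\ R^{2a}+S^{2b}+\cdots<1} R^{2|(\al+1)/p|-1}S^{2|(\be+1)/q|-1}\cdots\,dR\,dS\,\cdots.
\]
A final substitution $u=R^{2a}$, $v=S^{2b}$, \ldots\ reduces this to the classical Dirichlet integral $\int_{u_i>0,\ \sum u_i<1}\prod_i u_i^{A_i-1}\,du_i=\prod_i\Gamma(A_i)/\Gamma(1+\sum_i A_i)$ with $A_1=|(\al+1)/(ap)|$, $A_2=|(\be+1)/(bq)|$, \ldots. Rewriting this ratio of Gammas as $B(A_1,A_2,\ldots)/(A_1+A_2+\cdots)$ and collecting constants (the powers of $2$ contributed by $(2\pi)^{m+n+\cdots}$ cancel exactly against those from the angular and radial substitutions) produces the announced formula for $\om_2$.

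The only real difficulty is the bookkeeping: tracking the various multiplicative constants (powers of $2$ and $\pi$, the factors $1/p_j$, $1/q_k$, $1/a$, $1/b$, \ldots) through the chain of substitutions, and checking that the exponent identity $|\tilde\al|+m=2|(\al+1)/p|$ transforms each block's data into the Dirichlet parameters $A_i$ as claimed. All integrals are absolutely convergent since $\al,\be,\ldots\in\bN^m,\bN^n,\ldots$ force the $A_i$ to be positive, so no additional convergence argument is needed.
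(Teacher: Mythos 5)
Your proposal is correct and follows essentially the same route as the paper: polar coordinates in each variable, the power substitution $x_j\mapsto x_j^{p_j}$ to normalize the defining inequality, and (for part (b)) D'Angelo's trick of passing to spherical coordinates within each block to separate angular integrals (the sphere formula of Lemma~\ref{lemma1}) from a radial one. The only cosmetic difference is in the last radial step of (b), where you invoke the classical Dirichlet integral directly via $u=R^{2a}$ while the paper substitutes $R:=r^a$ and reuses the first (ball) formula of Lemma~\ref{lemma1}; these are the same computation.
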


\begin{proof}
(a)
Using polar coordinates $z_j=x_je^{\sqrt{-1}\theta_j}$ for each coordinate $z_j$ of $z=(z_1,\ldots,z_m)$, we have
\[
\om_{1}(\al)
=\int_{z\in\Om_1}x^{2\al}\prod x_jdx_jd\theta_j
=(2\pi)^m\int_{x\in\bR_+^m,\sum x_j^{2p_j}<1}x^{2\al+1} dx.
\]
After the change of variables $X_j:=x_j^{p_j}$, we have
\[
\om_{1}(\al)
=\frac{(2\pi)^m}{\prod p_j}\int_{X\in\bR_+^m,\sum X_j^{2}<1}X^{\frac{2\al+2}{p}-1} dX.
\]
We are done by Lemma \ref{lemma1}.

(b)
Using polar coordinates $z_j=x_je^{\sqrt{-1}\theta_j}, w_k=y_ke^{\sqrt{-1}\vp_k}, \ldots$, we have
\begin{multline*}
\om_{2}(\al,\be,\ldots)=\\
	(2\pi)^{m+n+\cdots}\int_{x\in\bR_+^m,y\in\bR_+^n,\ldots,\left(\sum x_j^{2p_j}\right)^{a}+\left(\sum y_k^{2q_k}\right)^{b}+\cdots<1}x^{2\al+1}y^{2\be+1}\cdots dx dy\cdots.
	\end{multline*}
After the change of variables $X_j:=x_j^{p}, Y_k:=y_k^q, \ldots$, we have
	\begin{multline*}
	\om_2(\al,\be,\ldots)=\\
	\frac{(2\pi)^{m+n+\cdots}}{\prod p_j\prod q_k\cdots}\int_{X\in\bR_+^m,Y\in\bR_+^n,\ldots,\left(\sum X_j^{2}\right)^{a}+\left(\sum Y_k^{2}\right)^{b}+\cdots<1}X^{\frac{2\al+2}{p}-1}Y^{\frac{2\be+2}{q}-1}\cdots dX dY\cdots.
	\end{multline*}
Now comes the trick we learned from \cite{dangelo}.
Changing to the spherical coordinates $X=r\xi, Y=s\eta, \ldots$, where $r, s,\ldots$ are positive reals and $\xi, \eta, \ldots$ live on unit spheres $\bS^{m-1}, \bS^{n-1}, \ldots$, respectively, we have
\begin{multline*}
\om_2(\al,\be,\ldots)=
\frac{(2\pi)^{m+n+\cdots}}{\prod p_j\prod q_k\cdots}
\int_{r,s,\ldots\in\bR_+,r^{2a}+s^{2b}+\cdots<1}
r^{\left|\frac{2\al+2}{p}\right|-1}s^{\left|\frac{2\be+2}{q}\right|-1}\cdots dr ds\cdots\times\\
\int_{\xi\in\bS^{m-1}_{+},\eta\in\bS^{n-1}_{+},\ldots}
\xi^{\frac{2\al+2}{p}-1}\eta^{\frac{2\be+2}{q}-1}\cdots d\sigma_m(\xi) d\sigma_n(\eta)\cdots,
\end{multline*}
where $S^{m-1}_+$ denotes $\bS^{m-1}\cap\bR^{m}_+$, and similarly for others.
The first integral is given by the first formula in Lemma \ref{lemma1} after the change of variables $R:=r^a, S:=s^b, \ldots$, and the second integral is given by the second formula in Lemma \ref{lemma1}.
\end{proof}

For later uses, we do the same computations in the more general context of weighted Bergman spaces.
Given a domain $\Om\sub\bC^m$ with smooth boundary, $L_{a,s}^2(\Omega)$, $s>-1$ denotes the weighted Bergman space consisting of all holomorphic functions $f$ on $\Omega$ such that $\int_{\Omega}|f(z)|^2 \rho(z)^{s}dV(z)<\infty$, where $\rho(z)$ is a positively signed smooth defining function for $\Omega$ and $dV$ is the Lebesgue measure.
For $\Om_1$ and $\Om_2$, we use the defining functions
\[
1-\sum|z_j|^{2p_j}\quad\text{and}\quad
1-\left(\sum\left|z_j\right|^{2p_j}\right)^{a}-\left(\sum\left|w_k\right|^{2q_k}\right)^{b}-\cdots,
\]
respectively.

\begin{proposition}\label{lemma22}
(a)
Given multi-index $\al\in\bN^m$, we have
\[
\om_{1,s}(\al)
	:=\left\|z^\al\right\|^2_{L^2_{a,s}(\Om_1)}
	=\frac{\pi^m}{\prod p_j}B\left(\frac{\al+1}{p}\right)\frac{\Gamma\left(\left|\frac{\al+1}{p}\right|\right) s!}{\Gamma\left(\left|\frac{\al+1}{p}\right|+s+1\right)}.
	\]

(b)
Given multi-indices $\al\in\bN^m, \be\in\bN^n, \ldots$, we have
\begin{multline*}
\om_{2,s}(\al,\be,\ldots)
	:=\left\|z^\al w^\be\cdots\right\|^2_{L^2_{a,s}(\Om_2)}=\\
	\frac{\pi^{m+n+\cdots}}{\prod p_j\prod q_k\cdots}\frac{1}{ab\cdots}
	B\left(\frac{\al+1}{p}\right)
	B\left(\frac{\be+1}{q}\right)\cdots
	B\left(\left|\frac{\al+1}{ap}\right|,\left|\frac{\be+1}{bq}\right|,\ldots\right)\times\\
	\frac{s!\Gamma\left(\left|\frac{\al+1}{ap}\right|+\left|\frac{\be+1}{bq}\right|+\cdots\right)}{\Gamma\left(s+1+\left|\frac{\al+1}{ap}\right|+\left|\frac{\be+1}{bq}\right|+\cdots\right)}.
	\end{multline*}
\end{proposition}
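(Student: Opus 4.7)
The plan is to mimic the unweighted proof of Proposition \ref{lemma2} step by step, carrying along the extra factor $\rho(z)^s$ through each change of variables. Since the weight depends only on the moduli $|z_j|$, the initial polar-coordinate step eliminates all phases exactly as before, contributing a factor of $(2\pi)^{m}$ (respectively $(2\pi)^{m+n+\cdots}$). The substitutions $X_j=x_j^{p_j}$ (and $Y_k=y_k^{q_k},\ldots$ in part (b)) then transform the integration region into the positive orthant of a Euclidean unit ball, and the weight becomes $(1-\sum X_j^2)^s$ in part (a) and $(1-(\sum X_j^2)^{a}-(\sum Y_k^2)^{b}-\cdots)^s$ in part (b).

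For part (a), I would next introduce spherical coordinates $X=r\xi$ with $\xi\in \bS^{m-1}_+$. The angular integral is evaluated by Lemma \ref{lemma1}, producing $B((\al+1)/p)/2^{m-1}$, while the radial integral becomes
\[
\int_0^1 r^{\left|\frac{2\al+2}{p}\right|-1}(1-r^2)^{s}\,dr \;=\; \tfrac{1}{2}\,B\!\left(\left|\tfrac{\al+1}{p}\right|,\,s+1\right),
\]
via the substitution $u=r^2$ and Euler's Beta integral. Rewriting this Beta function as a ratio of Gammas (with $\Gamma(s+1)=s!$) and collecting the factors of $2$ and $\pi$ yields the stated formula.

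Part (b) follows the same template as Proposition \ref{lemma2}(b). After spherical coordinates on each block (say $X=r_1\xi$, $Y=r_2\eta$, etc.) the angular integrals contribute $\prod B((\al+1)/p)/2^{m-1}\cdot B((\be+1)/q)/2^{n-1}\cdots$ by Lemma \ref{lemma1}. The substitutions $R_1=r_1^a, R_2=r_2^b,\ldots$ then produce the factor $1/(ab\cdots)$ and send the remaining radial region to the positive orthant $\{R_1^2+R_2^2+\cdots<1\}\cap\bR^k_+$, where $k$ is the number of egg-pieces. A final spherical decomposition $R_i=\rho u_i$ with $u\in \bS^{k-1}_+$ separates what is left: the angular piece gives $B(|(\al+1)/(ap)|,|(\be+1)/(bq)|,\ldots)/2^{k-1}$ by Lemma \ref{lemma1}, while the weighted radial integral evaluates to $\tfrac{1}{2}\,B(|(\al+1)/(ap)|+|(\be+1)/(bq)|+\cdots,\,s+1)$ by the same $u=\rho^2$ trick. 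Converting that last Beta function into Gammas produces the stated expression for $\om_{2,s}$.

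The argument is essentially mechanical; the only real obstacle is bookkeeping, namely verifying that the powers of $2$, $\pi$, $p_j$, $q_k$, $a$, $b$ and the various Gamma functions collapse to the claimed compact form. A useful sanity check is that setting $s=0$ must recover Proposition \ref{lemma2}, since $\Gamma(1)=1$ and $\Gamma(|\cdot|)/\Gamma(|\cdot|+1)=1/|\cdot|$.
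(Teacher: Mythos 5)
Your proposal is correct and follows essentially the same route as the paper: polar coordinates to kill the phases, the substitutions $X_j=x_j^{p_j}$ (and $R=r^a$, etc.), spherical decomposition with the angular integrals evaluated by Lemma \ref{lemma1}, and the weighted radial integral reduced to Euler's Beta integral via $u=r^2$. The bookkeeping you outline, including the collapse of $(2\pi)^m/2^m$ to $\pi^m$ and the $s=0$ sanity check, matches the paper's computation.
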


\begin{proof}
(a)
Similar to the proof of Proposition \ref{lemma2}.(a), we have
\[
\om_{1,s}(\al)
=\frac{(2\pi)^m}{\prod p_j}\int_{X\in\bR_+^m,\sum X_j^{2}<1}X^{\frac{2\al+2}{p}-1}\left(1-\sum X_j^{2}\right)^{s} dX.
\]
Changing to the spherical coordinates $X=r\xi$, $r>0$, $\xi\in\bS^{m-1}$, we have
\[
\om_{1,s}(\al)
=\frac{(2\pi)^m}{\prod p_j}\int_{\xi\in\bS_+^{m-1}}\xi^{\frac{2\al+2}{p}-1}d\sigma_m(\xi)\times
\int_{0}^{1}r^{\left|\frac{2\al+2}{p}\right|-1}\left(1-r^2\right)^{s} dr.
\]
The first integral is given by the second formula in Lemma \ref{lemma1}, and the second integral is given by the formula 
$\int_{0}^{1}t^{a-1}(1-t)^{b-1}dt=\Gamma(a)\Gamma(b)/\Gamma(a+b)$ after the change of variable $r^2=t$.

(b)
Similar to the proof of Proposition \ref{lemma2}.(b), we have
\begin{multline*}
\om_{2,s}(\al,\be,\ldots)=
\frac{(2\pi)^{m+n+\cdots}}{\prod p_j\prod q_k\cdots}
\int_{\xi\in\bS^{m-1}_{+},\eta\in\bS^{n-1}_{+},\ldots}
\xi^{\frac{2\al+2}{p}-1}\eta^{\frac{2\be+2}{q}-1}\cdots d\sigma_m(\xi) d\sigma_n(\eta)\cdots\times\\
\int_{r,s,\ldots\in\bR_+,r^{2a}+s^{2b}+\cdots<1}
r^{\left|\frac{2\al+2}{p}\right|-1}s^{\left|\frac{2\be+2}{q}\right|-1}\cdots dr ds\cdots\left(1-r^{2a}-s^{2b}-\cdots\right)^s.
\end{multline*}
The first integral is given by the second formula in Lemma \ref{lemma1}.
The second integral after the change of coordinates $R:=r^a, S:=s^b,\ldots$ becomes 
\[
\frac{1}{ab\cdots}\int_{R,S,\ldots\in\bR_+,R^{2}+S^{2}+\cdots<1}
R^{\left|\frac{2\al+2}{ap}\right|-1}S^{\left|\frac{2\be+2}{bq}\right|-1}\cdots \left(1-R^{2}-S^{2}-\cdots\right)^sdR dS\cdots.
\]
Changing to the spherical coordinates $(R,S,\ldots)=r\xi$, $r>0$, $\xi$ in the unit sphere, this latter integral equals an integral in the second formula in Lemma \ref{lemma1} multiplied by some integral of the form $\int_{0}^{1}t^{u-1}(1-t)^{v-1}dt=\Gamma(u)\Gamma(v)/\Gamma(u+v)$.
\end{proof}

\subsection{Some notations}
From now on, we are going to use the notation
\begin{equation}
z^{\fn}:=\frac{z_1^{n^1}\ldots z_m^{n^m}}{\sqrt{\omega_1(\fn)}},\quad
\fn=(n^1, \ldots, n^m)\in \bN^m \label{normalized}
\end{equation}
for the elements of the orthonormal basis of $L^2_a(\Om_1)$ derived in Section \ref{onb}.

Given a positive integer $q$, let $S_q(m)$ denote the set of all $q$-shuffles of the set $\{1,\ldots, m\}$, namely
\[
S_{q}(m):=\left\{\fj:=(j^1, \ldots, j^q)\in\bZ^q : 1\leq j^1< j^2< \cdots < j^q\leq m\right\}. 
\]
Whenever necessary, we identify shuffles in $S_q(m)$ with subsets of $\{1,\ldots,m\}$ of size $q$. 
This enables us to talk about the union, intersection, etc. of shuffles of $\{1,\ldots,m\}$ with themselves and with other subsets of $\{1,\ldots,m\}$.

\subsection{Boxes and their associated Hilbert modules}\label{2-box}
To each $\fj=(j^1, \ldots, j^q)\in S_q(m)$ and $\fb=(b^1, \ldots, b^q)\in \bN^q$ we associate the \textit{box}
\[
\BB^{\fb}_{\fj}:=\left\{(n^1,\ldots, n^m)\in \bN^m :  n^{j^i} \leq b^i\ \text{for} \ i=1,\ldots,q\right\},
\]
and to each box $\BB_{\fj}^{\fb}$ we associate the Hilbert space
\[
\mathcal{H}^{\fb}_{\fj}:=L^2_a(\Om_1)\ominus\overline{\left\lang z^{b^1+1}_{j^1}, \ldots, z^{b^q+1}_{j^q}\right\rang}
\]
consisting of all functions $X=\sum_{\fn\in\bN^m}  X_{\fn}z^\fn\in L^2_a(\Om_1)$ such that $X_{\fn}=0$ for every $\fn\in\bN^m\setminus\mathbf{B}^{\fb}_{\fj}$.
An element $X\in \mathcal{H}_{\fj}^{\fb}$ has the Taylor expansion $X=\sum X_{n^1\cdots n^m} z^{\fn}$ with summation over $n^{j^1}\leq b^1, \ldots , n^{j^q}\leq b^q$.
The general construction in Section \ref{introduction} about the orthogonal complements of polynomial ideals makes $\cH_{\fj}^{\fb}$ a Hilbert $A$-module.
($A$ denotes the ring of polynomials in $m$ variables.)
More explicitly, its fundamental tuple of Toeplitz operators is given by 
\[
T^{\fj,\fb}_{z_i}(z^{\fn}):=\left\{\begin{array}{ll} z_i z^{\fn},&\text{if}\ (n^1,\ldots,n^{i-1}, n^i+1,n^{i+1}, \ldots, n^m)\in \BB^{\fb}_{\fj},\\ 0,&
\text{otherwise}, \end{array}\right.,\quad
i=1,\ldots,m.
\]

In the next proposition we gather several facts about essential normality which will be used later.

\begin{proposition}[Arveson-Douglas]\label{fact}
(a)
Let $\Om$ be an open subset of $\bC^m$, 
$I\sub A$ be a homogeneous ideal, and $P, Q:=1-P$ be the orthogonal projections in $L^2(\Om)$ onto $\overline{I}$ and $I^{\perp}$, respectively.
Suppose that $L^2_a(\Om)$ is essentially normal.
Then $\overline{I}$ is essentially normal (module actions are given by restrictions of multiplications in $L^2_a(\Om)$) if and only if
$I^{\perp}$ is essentially normal, if and only if all $[M_{z_\al},P]$, $\al=1,\ldots,m$ are compact, if and only if all $PM_{z_\al}Q$ are compact, if and only if all $[M_{z_\al},Q]$ are compact, if and only if all $QM_{z_\al}^{\ast}P$ are compact.

	(b)
	Let $\cM$ and $\cN$ be isomorphic Hilbert $A$-modules.
	Then $\cM$ is essentially normal if and only of $\cN$ is; if so, then they represent the same odd $K$-homology class.

	(c)
	Let $\cM$ be an essentially normal Hilbert $A$-module, and $\cN\sub\cM$ be a submodule.
	Then $\cN$ is essentially normal if and only if the quotient module $\cM/\cN $ is.
	
	(d)
	Let $\Psi:\cA_1\ra\cA_2$ be a closed-range Hilbert $A$-module map between essentially normal Hilbert modules.
	Then the kernel and range of $\Psi$ are essentially normal.
\end{proposition}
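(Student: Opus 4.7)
The plan is to prove parts (a)--(d) in sequence, with (a) and (c) falling out of a single commutator identity for projections onto invariant subspaces and with (b) and (d) delivered by a Calkin-algebra form of the Fuglede--Putnam theorem: if two tuples of essentially normal operators are intertwined by a bounded $\Phi$, then the adjoint tuples are intertwined by $\Phi$ modulo compacts. The algebraic engine behind (a) and (c) is that if $\cN$ is a closed submodule of a Hilbert $A$-module $\cM$ and $P$ denotes the projection onto $\cN$ with $Q:=1-P$, then $T_j\cN\subset\cN$ forces $QT_jP=0$, and consequently $[T_j,P]=-PT_jQ$.

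For (a), I would combine $[M_{z_\al},P]=-PM_{z_\al}Q$ (from the identity above) with the expansion
\[
[QM_{z_j}Q,\,QM_{z_k}^*Q] \;=\; Q[M_{z_j},M_{z_k}^*]Q \;+\; (QM_{z_k}^*P)(M_{z_j}Q),
\]
obtained by inserting $1=P+Q$ and using $QM_{z_j}P=0$. The first term on the right is compact by essential normality of $L^2_a(\Om)$, while the second is compact iff $QM_{z_k}^*P$ is compact (equivalently iff $PM_{z_k}Q$ is). A symmetric expansion of $[PM_{z_j}P,\,PM_{z_k}^*P]$ yields the same criterion for $\overline{I}$, which establishes all the equivalences asserted in (a).

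Part (b) I would obtain by applying Fuglede--Putnam to the isomorphism $\Phi:\cM\to\cN$: from $\Phi T_{z_j}^\cM=T_{z_j}^\cN\Phi$ and essential normality of both tuples it follows that $\Phi T_{z_j}^{\cM*}-T_{z_j}^{\cN*}\Phi$ is compact, and composing the two intertwinings gives $[T_{z_j}^\cN,T_{z_k}^{\cN*}]\Phi=\Phi[T_{z_j}^\cM,T_{z_k}^{\cM*}]$ modulo compacts, so boundedness of $\Phi^{-1}$ transports essential normality in either direction. For the $K$-homology equality I would use the polar decomposition $\Phi=U|\Phi|$: since $\Phi$ is invertible, $U$ is unitary, $|\Phi|$ is a positive invertible element of the module Toeplitz algebra, and $U$ implements a unitary equivalence of the two Toeplitz $C^*$-algebras modulo $\fK$, preserving the Brown--Douglas--Fillmore class. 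Part (c) is (a) reread for an arbitrary closed submodule: the only property of $\overline{I}$ used in (a) is its invariance under the module actions, so the same commutator identities give the equivalence between essential normality of $\cN$ and of $\cM/\cN$.

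The main obstacle is (d), because on its face the equivalences from (b) and (c) only close a loop among $\cK:=\ker\Psi$, $\cA_1/\cK$, $\mathcal{R}:=\mathrm{im}\,\Psi$, and $\cA_2/\mathcal{R}$ without supplying a starting point. My plan is therefore to prove essential normality of $\mathcal{R}$ directly. Applying Fuglede--Putnam to $\Psi$ gives $T_{z_k}^{\cA_2*}\Psi=\Psi T_{z_k}^{\cA_1*}+K_k$ with $K_k$ compact. A direct expansion using $T_{z_j}^{\mathcal{R}}=T_{z_j}^{\cA_2}|_\mathcal{R}$ and $T_{z_k}^{\mathcal{R}*}=P_\mathcal{R}T_{z_k}^{\cA_2*}|_\mathcal{R}$ then yields
\[
[T_{z_j}^{\mathcal{R}},\,T_{z_k}^{\mathcal{R}*}]\,\Psi \;=\; \Psi\,[T_{z_j}^{\cA_1},\,T_{z_k}^{\cA_1*}] \;+\; (\text{compact}),
\]
which is compact on $\cA_1$ by essential normality of $\cA_1$. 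Since $\Psi$ has closed range, the open mapping theorem supplies a bounded inverse $\mathcal{R}\to\cK^\perp$ to $\Psi|_{\cK^\perp}$, and post-composing with it yields $[T_{z_j}^{\mathcal{R}},T_{z_k}^{\mathcal{R}*}]$ compact, so $\mathcal{R}$ is essentially normal. Finally, (c) applied to $\mathcal{R}\subset\cA_2$ gives $\cA_2/\mathcal{R}$ essentially normal, (b) applied to the module isomorphism $\cA_1/\cK\cong\mathcal{R}$ transfers this to $\cA_1/\cK$, and (c) applied to $\cK\subset\cA_1$ finishes with $\cK$ essentially normal.
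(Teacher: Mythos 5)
Parts (a) and (c) of your proposal are correct and coincide with the paper's own treatment of (a): everything rests on the identity $[M_{z_j},P]=-PM_{z_j}Q$ coming from invariance of $\overline I$, together with an expansion of the compressed commutators whose error term is $(PM_{z_k}Q)^{\ast}(PM_{z_j}Q)$ on the quotient side and $-(PM_{z_j}Q)(PM_{z_k}Q)^{\ast}$ on the submodule side; taking $j=k$ gives the nontrivial implication. Your remark that (c) is just (a) reread for an arbitrary closed submodule is exactly right (the paper only cites a reference there). Your direct proof in (d) that $\operatorname{im}\Psi$ is essentially normal --- intertwine the adjoints modulo compacts by Fuglede--Putnam, which is legitimate there because $\cA_1$ and $\cA_2$ are \emph{both assumed} essentially normal, then compress and cancel $\Psi$ using the bounded inverse of $\Psi|_{(\ker\Psi)^{\perp}}$ --- is also sound, and is in substance the argument of the result of Douglas that the paper cites for (d).

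The genuine gap is the ``if and only if'' in (b). The Calkin-algebra Fuglede--Putnam theorem applied to $\Phi T^{\cM}_{z_j}=T^{\cN}_{z_j}\Phi$ requires \emph{both} tuples to be essentially normal, which is precisely what one direction of the equivalence must establish; as written, your argument assumes its conclusion. This is not cosmetic: a bounded invertible module map need not preserve essential normality. For instance, with $T=\bigoplus_n\operatorname{diag}(a_n,b_n)$ diagonal and $\Phi=\bigoplus_n\left(\begin{smallmatrix}1&c_n\\0&1\end{smallmatrix}\right)$, where the sequences are bounded but $c_n(a_n-b_n)\not\to 0$, the operator $\Phi T\Phi^{-1}$ is similar to the normal $T$ yet has non-compact self-commutator. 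So the equivalence in (b) is available only for unitary isomorphisms; for a general similarity one must assume both modules essentially normal and prove only the equality of $K$-homology classes (your polar-decomposition argument then works, except that $|\Phi|$ need not lie in the Toeplitz algebra --- what Fuglede--Putnam actually gives, and what you need, is that $|\Phi|$ essentially commutes with the module operators). The gap propagates into the last step of your (d), where you invoke (b) to push essential normality across the non-unitary isomorphism $\cA_1/\ker\Psi\cong\operatorname{im}\Psi$. The repair is to prove $\ker\Psi$ essentially normal directly: Fuglede--Putnam makes $\Psi T^{\cA_1\ast}_{z_k}-T^{\cA_2\ast}_{z_k}\Psi$ compact, hence $\Psi T^{\cA_1\ast}_{z_k}P_{\ker\Psi}=\Psi P_{(\ker\Psi)^{\perp}}T^{\cA_1\ast}_{z_k}P_{\ker\Psi}$ is compact; since $\Psi$ is bounded below on $(\ker\Psi)^{\perp}$, the corner $P_{(\ker\Psi)^{\perp}}T^{\cA_1\ast}_{z_k}P_{\ker\Psi}$ is compact, the opposite corner vanishes by invariance, and the criterion of (a) applies. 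Note that this restriction is consistent with how the paper uses (b): wherever it is invoked, both modules are already known to be essentially normal.
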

\begin{proof}
	(a)
	Our reference is \cite[Theorem 4.3]{arveson-conjecture}.
	Recall that an operator $T$ is compact if and only if $T^{\ast}$ is compact, if and only if $TT^\ast$ is compact. 
	Let the module action of $p\in A$ on $L^2(\Om)$, $\overline{I}$ and $I^{\perp}$ be denoted by operators $M_p$, $R_p$ and $T_p$, respectively.
	For brevity, set $M_\al:=M_{z_{\al}}$, $R_\al:=R_{z_{\al}}$ and $T_\al:=T_{z_{\al}}$.
	The last four statements are easily seen to be equivalent.
	Here are the reasons. 
	Since $\overline{I}$ is invariant under $M_\al$, we have $PM_\al P=M_\al P$. Then 
	\[
	[M_\al,P]=M_\al P-PM_\al=PM_\al P-PM_\al=-PM_\al Q.
	\]
	The equality $P+Q=1$ gives $[M_\al,P]=-[M_\al,Q]$.
	Also note that $(PM_\al Q)^{\ast}=QM_\al^{\ast}P$.
	
For the rest, we need the assumption that $L^2(\Om)$ is essentially normal.
With an abuse of language, one says that, as mappings from $L^2(\Om)$ to $\overline{I}$, $R_\al P$ and $R_\be^{\ast} P$ equal $ P M_\al P = M_\al P$ and $PM_\be^\ast P$, respectively. Then 
	\begin{multline*}	
	[R_\al,R_\be^{\ast}]P
	=M_\al PM_\be^{\ast}P-PM_\be^{\ast}M_\al P
	\sim M_\al PM_\be^{\ast}P-PM_\al M_\be^{\ast}P
	=[M_\al,P]M_\be^{\ast}P\\
	=-P M_\al QM_\be^{\ast}P
	=-(P M_\al Q) (QM_\be^{\ast}P)
	=-(P M_\al Q) (PM_\be Q)^{\ast}
	=-[M_\al,P][M_\be,P]^{\ast},
	\end{multline*}
	where $\sim$ denotes equality modulo compacts.
	This identity shows that all $[R_\al,R_\be^{\ast}]$ are compact if and only if all $[M_\al,P]$ are.
	The rest of the proof is dual.
	As mappings from $L^2(\Om)$ to $I^{\perp}$, $T_\al Q$ and $T_\be^{\ast} Q$ equal $ Q M_\al Q$ and $QM_\be^\ast Q=M_\be^\ast Q$, respectively.
	We also have the identity
	\[
	[T_\al,T_\be^{\ast}]Q\sim [M_\be,Q]^{\ast}[M_\al,Q],
	\]
	which proves that all $[T_\al,T_\be^{\ast}]$ are compact if and only if all $[M_\al,Q]$ are.

	(b, c, d) Refer to 
	\cite[Proposition 4.4]{dty},
	\cite[Theorem 2.1]{douglas-monomial} and \cite[Theorem 2.2]{douglas-index}, respectively.
\end{proof}

\begin{lemma}\label{prop:ess-norm-box} 
	Each $\mathcal{H}_{\fj}^{ \fb}$ is essentially normal.
\end{lemma}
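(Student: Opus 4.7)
The plan is to apply Proposition \ref{fact}(a) to the homogeneous monomial ideal $I = \lang z_{j^1}^{b^1+1}, \ldots, z_{j^q}^{b^q+1}\rang$. Since $\overline{I}$ is the closed linear span of $\{z^{\fm} : \fm \notin \BB^{\fb}_{\fj}\}$, the identification $\mathcal{H}_{\fj}^{\fb} = I^\perp$ is immediate. Granted that $L^2_a(\Om_1)$ is itself essentially normal, Proposition \ref{fact}(a) then reduces the lemma to showing that $[M_{z_\alpha}, P]$ is compact for every $\alpha$, where $P$ is the orthogonal projection onto $\mathcal{H}_{\fj}^{\fb}$.

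To establish essential normality of $L^2_a(\Om_1)$ itself, I would work in the orthonormal monomial basis and set $c_i(\fn) := \sqrt{\om_1(\fn+e_i)/\om_1(\fn)}$. Then $[M_{z_i}, M_{z_i}^*]$ is diagonal with entries $c_i(\fn-e_i)^2 - c_i(\fn)^2$, and for $i \neq k$ the commutator $[M_{z_i}, M_{z_k}^*]$ is a weighted shift $z^{\fn} \mapsto w_{ik}(\fn)\, z^{\fn+e_i-e_k}$. Writing $x_j = (n^j+1)/p_j$ and $\lambda = \sum_j x_j$, the explicit formula of Proposition \ref{lemma2}(a) combined with the elementary asymptotic $\Gamma(x+c)/\Gamma(x) = x^c(1+O(1/x))$ yields
\[
c_i(\fn)^2 = \frac{\Gamma(x_i + 1/p_i)}{\Gamma(x_i)}\cdot\frac{\Gamma(\lambda+1)}{\Gamma(\lambda+1+1/p_i)} \sim (x_i/\lambda)^{1/p_i},
\]
and a similar expression for $c_i(\fn-e_i)^2$. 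A careful expansion shows the diagonal entries and the shift weights $w_{ik}(\fn)$ are all of order $O(1/\lambda)$, hence tend to $0$ as $|\fn| \to \infty$, giving compactness.

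For the final step, since $P$ is diagonal in the monomial basis, a direct computation yields
\[
[M_{z_\alpha}, P]\, z^{\fn} = \bigl(\chi_{\fn \in \BB^{\fb}_{\fj}} - \chi_{\fn + e_\alpha \in \BB^{\fb}_{\fj}}\bigr)\, c_\alpha(\fn)\, z^{\fn+e_\alpha}.
\]
This vanishes unless $\alpha = j^s$ for some $s$ and $\fn$ lies on the top face $\{\fn \in \BB^{\fb}_{\fj} : n^{j^s} = b^s\}$; on this face the $\fj$-coordinates of $\fn$ are uniformly bounded, so $|\fn| \to \infty$ forces $\lambda \to \infty$ with $x_\alpha$ remaining fixed. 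By Proposition \ref{lemma2}(a), $c_\alpha(\fn)^2 \sim C\, \lambda^{-1/p_{j^s}} \to 0$, so $[M_{z_\alpha}, P]$ is a weighted shift with weights vanishing at infinity along its support, hence compact.

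The one technically delicate point is the cancellation in the diagonal entries of $[M_{z_i}, M_{z_i}^*]$ for $L^2_a(\Om_1)$: both $c_i(\fn)^2$ and $c_i(\fn-e_i)^2$ share the leading asymptotic $(x_i/\lambda)^{1/p_i}$, so to see compactness one must extract the subleading $O(1/\lambda)$ correction in the Gamma-function expansion; the edge case $n^i = 0$ is handled directly since only the $-c_i(\fn)^2$ term remains. The case of $\Om_2$ is entirely parallel, using Proposition \ref{lemma2}(b) in place of (a).
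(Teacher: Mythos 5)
Your proposal is correct and follows essentially the same route as the paper: identify $\mathcal{H}_{\fj}^{\fb}$ as $I^{\perp}$ for the monomial ideal $I=\lang z_{j^1}^{b^1+1},\ldots,z_{j^q}^{b^q+1}\rang$, prove essential normality of $L^2_a(\Om_1)$ via the Gamma-function asymptotics of Proposition \ref{lemma2}.(a), observe that $[M_{z_\al},P]$ is a weighted shift supported on the top faces of the box with weights controlled by the ``$x_\al$ bounded, $\lambda\to\infty$'' regime, and conclude with Proposition \ref{fact}.(a). The one genuine difference is how the off-diagonal commutators $[M_{z_i},M_{z_k}^*]$, $i\neq k$, are handled: the paper notes that the $M_{z_i}$ mutually commute and invokes the Fuglede--Putnam theorem (in the Calkin algebra) to reduce everything to the self-commutators $[M_{z_i},M_{z_i}^*]$, whereas you propose to estimate the shift weights $w_{ik}(\fn)$ directly. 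Your route is viable but it is precisely the step you leave as ``a careful expansion shows,'' and it is the only unverified claim in your write-up; the Fuglede--Putnam reduction buys you that estimate for free, at the cost of tying the argument to compactness rather than Schatten classes (a limitation the authors explicitly acknowledge in the introduction). Your treatment of the delicate cancellation between $c_i(\fn-e_i)^2$ and $c_i(\fn)^2$, including the $n^i=0$ edge case, matches the paper's two-regime analysis.
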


\begin{proof}
We first show that $L^2_a(\Om_1)$ is essentially normal. 
Let $M_{z_i}\in B(L^2_{a}(\Om_1))$, $i=1,\ldots,m$ be the multiplication by the coordinate function $z_i$.
Since these operators commutate with each other, according to the Fuglede-Putnam theorem, it suffices to verify that each $M_{z_i}$ is essentially normal.
Clearly  
\[
\left[M_{z_i},M_{z_i}^*\right](z^\fn)
=\lambda z^\fn,\quad
\forall \fn=(n_1,\ldots,n_m)\in\bN^m,
\]
where
\[
\lambda
=\lambda'-\lambda'',\quad
\lambda'
=\frac{\omega_1(n_1\cdots n_m)}{\omega_1(n_1\cdots n_i-1\cdots n_m)},\quad
\lambda''
=\frac{\omega_1(n_1\cdots n_i+1\cdots n_m)}{\omega_1(n_1\cdots n_m)}.
\]
We need to check that $\lambda\ra 0$ when the norm of $\fn$ (say the $l^1$ norm) tends to infinity.
By Proposition \ref{lemma2}.(a), we have
\[
\lambda'
=\frac{\Gamma\left(\frac{n_i+1}{p_i}\right)}{\Gamma\left(\frac{n_i}{p_i}\right)}
\frac{\Gamma\left(N+\frac{n_i}{p_i}\right)}{\Gamma\left(N+\frac{n_i+1}{p_i}\right)}
\frac{N+\frac{n_i}{p_i}}{N+\frac{n_i+1}{p_i}},
\]
where 
\[
N:=\sum_{l\neq i}\frac{n_l+1}{p_l}.
\]
Note that by Stirling's formula (or more strongly \cite{et}), $\Gamma(x+a)/\Gamma(x)\approx x^a$ as the real variable $x$ grows large.
Therefore, when $n_i$ is bounded and $N\ra\infty$, $\lambda'$ is dominated by $N^{-1/p_i}$, so $\lambda\ra 0$. 
On the other hand, when $n_i\ra\infty$, $\lambda'$ asymptotically behaves like
\[
\left(\frac{n_i}{p_i}\right)^{\frac{1}{p_i}}
\left(N+\frac{n_i}{p_i}\right)^{-\frac{1}{p_i}}
\approx\left(1-\frac{N}{\frac{n_i}{p_i}+N}\right)^{\frac{1}{p_i}}
\approx
1-\frac{N}{n_i+p_iN}.
\]
This shows that $\lambda=\lambda'-\lambda''\ra 0$ when $n_i\ra \infty$.
We have shown that $L^2_a(\Om_1)$ is essentially normal. 

Let $P$ be the orthogonal projection in $L^2_a(\Om_1)$ onto $\mathcal{H}_{\fj}^{\fb}$.
To prove our lemma, according to Proposition \ref{fact}.(a), it suffices to check that each $[M_{z_i},P]$ is compact. 
For each $\fn\in\BB^{\fb}_{\fj}$ we have
	\[
	PM_{z_i}(z^\fn)=\left\{ \begin{array}{ll} \sqrt{\frac{\omega_1(n_1\cdots n_i+1 \cdots n_m)}{\omega_1(n_1\cdots n_m)}} z^{n_1\cdots n_i+1 \cdots n_m},&\text{if}\ (n_1\cdots n_i+1\cdots n_m)\in  \BB^{\fb}_{\fj},\\ 0,&\text{otherwise},
	\end{array}\right.
	\]
	\[
	M_{z_i}P(z^{\fn})=\left\{ \begin{array}{ll} \sqrt{\frac{\omega_1(n_1\cdots n_i+1 \cdots n_m)}{\omega_1(n_1\cdots n_m)}} z^{n_1\cdots n_i+1\cdots n_m},&\text{if}\ (n_1\cdots n_i\cdots n_m)\in  \BB^{\fb}_{\fj},\\ 0,&\text{otherwise}.
	\end{array}\right.
	\]
	Note that the coefficients $\sqrt{\cdots}$ appear because of the normalization in definition (\ref{normalized}).
	Therefore
	\[
	\left[M_{z_i},P\right](z^{\fn})=\left\{\begin{array}{ll}
	\sqrt{\frac{\omega_1(n_1\cdots b_l+1\cdots n_m)}{\omega_1(n_1\cdots b_l\cdots n_m)}} z^{n_1\cdots n_i+1 \cdots n_m},&\text{if}\ (n_1\cdots n_i\cdots n_m)\in  \BB^{\fb}_{\fj}\  \text{and}\\
	&\exists l \ \text{such that} \ i=j_l, n_i=b_l,\\ 0,&\text{otherwise}.
	\end{array}\right.
	\]
We need to check that the ratio
\[
\rho:=\frac{\omega_1(n_1\cdots b_l+1\cdots n_m)}{\omega_1(n_1\cdots b_l\cdots n_m)},
	\]
with $l$ and $b_l$ fixed, approaches zero when the norm of $(n_1, \ldots, b_l, \ldots, n_m)$ tends to infinity. 
This was verified during the proof of the essential normality of $L^2_a(\Om_1)$.
%
%
This finishes the proof of our lemma for domains of type (\ref{egg}).
The proof for domains of type (\ref{egg2}) is completely similar, having the explicit formula for $\om_2(\al,\be,\ldots)$ (Proposition \ref{lemma2}.(b)) at hand.
\end{proof}

\begin{remark}\label{schatten}
With arguments similar to the ones in the proof of Lemma \ref{prop:ess-norm-box}, one can show that $L^2_a(\Om_1)$ is $p$-essentially normal if and only if $p$ is strictly larger than
\[
\max\left\{
m,
p_j(m-1): j=1,\ldots,m\right\}.
\]
The computations will be included in our forthcoming paper \cite{jabbari-schatten}.
It is worth pointing out that it is a new phenomenon that the $p$-essential normality of the Bergman module depends not only on the dimension of the domain but also on its geometry.
This phenomenon will also be explored in \cite{jabbari-schatten}.
\end{remark}

\subsection{The geometry of the Hilbert modules associated to boxes}\label{subsec:geometry}
Consider the Hilbert module $\cH_{\fj}^{\fb}$ associated to the box $\BB_{\fj}^{\fb}$. 
Set  
\[
\Om_{1,\fj}:=\left\{(z_1, \ldots, z_m)\in \Om_1 : \ z_{j^1}=\cdots =z_{j^q}=0\right\}. 
\]
Observe that $\Om_{1,\fj}$ is an egg domain of type (\ref{egg}) inside $\bC^{m-q}$.
Consider the Hilbert space 
\[
\widetilde{\mathcal{H}}_{\fj}^{\fb}
:=\bigoplus _{\substack{\mathfrak{i}=(i^1,\ldots,i^q)\in \bN^q \\ i^1\leq b^1, \ldots, i^q\leq b^q}} L^2_{a, \sum_{l=1}^q\frac{i^l+1}{p_{j^l}}}(\Om_{1,\fj}),
\]
and the map $R^{\fb}_{\fj}: \mathcal{H}_{\fj}^{\fb}\to \widetilde{\mathcal{H}}_{\fj}^{\fb}$ given by sending $X\in\mathcal{H}_{\fj}^{\fb}$ to 
{\small
\[
Y=\sum Y^{\mathfrak{i}},\quad
Y^{\mathfrak{i}}
=\left(\frac{\pi^q}{i!\prod_{l=1}^{q} p_{j^{l}}}\frac{\prod_{l=1}^q\Gamma \left(\frac{i^l+1}{p_{j^l}}\right)}{\left(\sum_{l=1}^q\frac{i^l+1}{p_{j^l}}\right)!}\right)^{\frac{1}{2}}\frac{\pa^{|i|} X}{\pa z_{j^1}^{i^1}\cdots \pa z_{j^q}^{i^q}}\bigg|_{\Om_{1,\fj}}\in L^2_{a, \sum_{l=1}^q\frac{i^l+1}{p_{j^l}}}(\Om_{1,\fj}).
\]
A straightforward computation with the orthonormal bases (Propositions \ref{lemma2} and \ref{lemma22}) shows that: \textit{$R^{\fb}_{\fj}$ is an isometric isomorphism of Hilbert spaces.}


Now consider the trivial vector bundle $E^\fb_{\fj}:=\bC^{(b^1+1)\cdots(b^q+1)}\times\Om_{1,\fj}$ over $\Om_{1,\fj}$, together with the standard frame $e_{\mathfrak{i}}$, $\mathfrak{i}=(i^1,\ldots,i^q)\in\bN^q$, $i^1\leq b^1, \ldots, i^q\leq b^q$, and equip it with the Hermitian structure
\[
\langle e_{\mathfrak{i}}, e_{\mathfrak{i}'}\rangle(z)=\left(1-\sum_{l=1}^q|z_l|^{2p_{j^l}}\right)^{\sum_{l=1}^q\frac{i^l+1}{p_{j^l}}}\delta_{\mathfrak{i}, \mathfrak{i}'},\quad
z\in \Om_{1,\fj},
\]
where $\delta$ is the Kronecker tensor.
This way, $\widetilde{\mathcal{H}}_{\fj}^{\fb}$ can be identified with the Bergman space of the $L^2$-holomorphic sections of $E^{\fb}_{\fj}$. 
Under the isomorphism $R^{\fb}_{\fj}$, one can identify the Toeplitz algebra generated by $T^{\fj, \fb}_{z_i}\in B(\cH^{\fb}_{\fj})$, $i=1, \ldots, m$ with the algebra generated by matrix-valued Toeplitz operators on the latter Bergman space of $L^2$-holomorphic sections of $E^{\fb}_{\fj}$.


\subsection{The construction of the resolution}\label{2-resolution}
This section constructs the resolution in Theorem \ref{theorem2}.
Let the ideal $I\sub A$ be generated by distinct monomials
\[
z^{\alpha_i},\quad
\alpha_i:=(\al_i^1,\ldots,\al_i^{m})\in\bN^m,\quad
i=1,\ldots, l.
\]
Let the \textit{complementary space} $\mathsf{C}(I)\sub\bN^m$ be the set of the exponents of those monomials which do not belong to $I$.
Note that the set of monomials belonging to $I$ is a basis of $I$ as a complex vector space \cite[Theorem 1.1.2]{hh}. 
Also note that a monomial $u$ belongs to $I$ if and only if there is a monomial $v$ such that $u=vz^{\alpha_i}$ for some $i=1, \ldots, l$ \cite[Proposition 1.1.5]{hh}.
In other words, $z_1^{n^1}\cdots z_m^{n^m}\in\mathsf{C}(I)$ if and only if for every $i=1,\ldots,l$ there exists $s_i\in\{1,\ldots,m\}$ such that $n^{s_i}< \alpha_i^{s_i}$. 
Consider the finite collection 
\[
S(\alpha_1, \ldots, \alpha_l):=\{1,\ldots,m\}^l
\]
of  $l$-tuples $\mathfrak{s}=(s_1, \ldots, s_l)$ of integers such that $1\leq s_i\leq m$ for every $i$. 
Given $\mathfrak{s}$, let $\fj_{\mathfrak{s}}$ be the shuffle associated to the set $\{s_1,\ldots,s_l\}$.
For each $j\in \fj_{\mathfrak{s}}$, let $b_j$ be the minimum of all $\alpha_i^{s_i}-1$, $i=1,\ldots,l$, such that $s_i=j$. 
Set $\fb_{\fs}:=(b_j)_{j\in\fj_{\fs}}$. 
The following symbolic logic computation shows that: \textit{$\mathsf{C}(I)$ is the union of boxes $\BB_{\fj_{\fs}}^{\fb_{\fs}}$, $\mathfrak{s}\in S(\alpha_1, \ldots, \alpha_l)$}.
\begin{align*}
z_1^{n^1}\cdots z_m^{n^m}\in\mathsf{C}(I)
&\leftrightarrow
\Big(n^1<\al_1^1\vee\cdots\vee n^m<\al_1^m\Big)\w\cdots
\w\Big(n^1<\al_l^1\vee\cdots\vee n^m<\al_l^m\Big)\\
&\leftrightarrow
\bigvee_{(s_1,...,s_l)\in\{1,\ldots,m\}^l}\Big(
n^{s_1}<\al_1^{s_1}\w\cdots\w
n^{s_l}<\al_l^{s_l}\Big).
\end{align*}



\subsubsection{The construction of modules $\cA_q$}\label{subsec:hilbert-mod}
From now on, fix a finite collection of boxes
\begin{equation}
\BB_{\fj_i}^{\fb_i},\quad
i=1,\ldots,k\label{some}
\end{equation}
such that their union equals $\mathsf{C}(I)$.
Given $I\sub\{ 1, \ldots, k\}$ (note that we are using the symbol $I$ for two purposes), let 
\[
\BB_{\fj_I}^{\fb_I}:=\bigcap_{i\in I} \BB_{\fj_i}^{\fb_i}
\]
denote the intersection of boxes $\BB_{\fj_i}^{\fb_i}$, $i\in I$.
(Note that the intersections of boxes are again boxes.)
Each box $\BB_{\fj_I}^{\fb_I}$ has a corresponding Hilbert module $\mathcal{H}_{\fj_I}^{\fb_I}$ as introduced in Section \ref{2-box}. 
For each $q=1,\ldots,k$, set

\[
\cA_q:=\bigoplus\limits_{I\in S_q(k)} \mathcal{H}_{\fj_I}^{\fb_I},\quad
\cA_0:=L^2_a(\Om_1).
\]
Note that each Hilbert space $\cA_q$ is equipped with a Hilbert $A$-module structure coming from the $A$-module structures on its direct summands. 
It is immediate from Lemma \ref{prop:ess-norm-box} that

\begin{proposition}\label{essnormalAq}
Each $\cA_q$ is essentially normal.
\end{proposition}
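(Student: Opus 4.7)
The plan is to derive Proposition \ref{essnormalAq} directly from Lemma \ref{prop:ess-norm-box} and the fact that essential normality is preserved under finite orthogonal direct sums of Hilbert $A$-modules.

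First, I would dispose of the case $q=0$ separately: $\cA_0 = L^2_a(\Om_1)$ was already shown to be essentially normal in the opening paragraphs of the proof of Lemma \ref{prop:ess-norm-box} via the asymptotic analysis of $\omega_1(\fn)$ and an appeal to the Fuglede--Putnam theorem. So I simply cite that part of the argument.

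For $q \geq 1$, I would proceed in two steps. The first step is to observe that each intersection $\BB_{\fj_I}^{\fb_I} = \bigcap_{i \in I}\BB_{\fj_i}^{\fb_i}$ is again a box in the sense of Section \ref{2-box} (this is explicitly noted in the text): namely, the shuffle $\fj_I$ is the union of the underlying index sets $\fj_i$, $i \in I$, and the corresponding bound for a coordinate index $j \in \fj_I$ is the minimum of the bounds coming from those $\BB_{\fj_i}^{\fb_i}$ that constrain $j$. Consequently Lemma \ref{prop:ess-norm-box} applies and each summand $\mathcal{H}_{\fj_I}^{\fb_I}$ is essentially normal as a Hilbert $A$-module.

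The second step is the direct-sum observation. On the finite orthogonal direct sum $\cA_q = \bigoplus_{I \in S_q(k)} \mathcal{H}_{\fj_I}^{\fb_I}$, the module action of $z_i$ is the diagonal operator $T_{z_i}^{\cA_q} = \bigoplus_{I} T^{\fj_I,\fb_I}_{z_i}$, so for any $i,k$ we have
\[
\bigl[T_{z_i}^{\cA_q}, (T_{z_k}^{\cA_q})^{\ast}\bigr]
= \bigoplus_{I \in S_q(k)} \bigl[T^{\fj_I,\fb_I}_{z_i}, (T^{\fj_I,\fb_I}_{z_k})^{\ast}\bigr].
\]
Each summand on the right is compact by step one, and a finite direct sum of compact operators is compact, so the left-hand side is compact. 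This is exactly essential normality of $\cA_q$.

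There is no real obstacle here; the only thing to verify carefully is that intersections of boxes are boxes, so that Lemma \ref{prop:ess-norm-box} can be invoked uniformly over $I \in S_q(k)$. Everything else is an immediate consequence of the definition of direct-sum Hilbert $A$-module and the two-sided ideal property of $\fK$.
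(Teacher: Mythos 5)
Your proof is correct and is exactly the argument the paper intends: the paper simply declares the proposition "immediate from Lemma \ref{prop:ess-norm-box}," and you have filled in the two obvious ingredients (intersections of boxes are boxes, and essential normality passes to finite orthogonal direct sums with diagonal module action). The only cosmetic issue is your reuse of the letter $k$ both for the number of boxes and for the commutator index in $[T_{z_i}^{\cA_q},(T_{z_k}^{\cA_q})^{\ast}]$.
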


\subsubsection{The construction of maps $\Psi_q$}\label{subsec:morphism}
%
%

Thinking of the elements of $S_{q+1}(k)$ as the subsets $I_{q+1}\sub\{1,\ldots, k\}$ of size $q+1$, define the maps $f^i_{q+1}: S_{q+1}(k)\to S_{q}(k)$, $i=1, \ldots, q+1$ by setting $f^i_{q+1}(I_{q+1})$ to be the subset of $\{1, \ldots, k\}$ obtained by dropping the $i$-th smallest element in $I_{q+1}$. 
The map $\Psi_q:\cA_q\ra\cA_{q+1}$ is defined by sending $X=\sum_{I_q\in S_q(k)} X^{I_q}\in\cA_q$, $X^{I_q}\in  \mathcal{H}_{\fj_{I_q}}^{\fb_{I_q}}$ to $Y=\sum_{I_{q+1}\in S_{q+1}(k)} Y^{I_{q+1}}\in\cA_{q+1}$, $Y^{I_{q+1}}\in \mathcal{H}_{\fj_{I_{q+1}}}^{\fb_{I_{q+1}}}$, given by  
\[
\left(Y^{I_{q+1}}\right)_{\fn}=
\begin{cases}
\sum_{i=1}^{q+1}(-1)^{i-1} \left(X^{f^i_{q+1}(I_{q+1})}\right)_{\fn},&\fn\in \BB_{\fj_{I_{q+1}}}^{\fb_{I_{q+1}}},\\
0,&\text{otherwise}.
\end{cases}
\]

\begin{remark}
Similar to the explanation in Section \ref{subsec:geometry}, each Hilbert module $\cA_q$, $q=1,\ldots, k$ can be identified with  the Bergman space of the $L^2$-holomorphic sections of a Hermitian vector bundle on a disjoint union of subsets of $\Om_1$. Under this identification, the module morphisms $\Psi_q$, $q=0, \ldots, k-1$ can be realized as the restriction maps of jets of holomorphic sections to the subsets. 
Although this geometric picture is not used heavily in what follows, we believe that such an intuition will play a crucial role in the study of nonradical ideals beyond monomials.
\end{remark}

\section{The Proof of Theorem \ref{theorem2}}\label{proof}
In this section we prove Theorem \ref{theorem2}.
Again, we develop the details for a domain $\Om_1$ of type (\ref{egg}), and domains of type (\ref{egg2}) can be treated similarly.
\noindent
\subsection{The proof of Theorem \ref{theorem2}.(a)}
In this section we prove that the construction of Section \ref{2-resolution} is a resolution of Hilbert modules   asserted in Theorem \ref{theorem2}.(b).
This is an adjustment of the  proof of Theorem \ref{theorem1}, first appeared in \cite[Theorem 1.1]{djty}.

\begin{proposition}\label{prop:module-morphism}
Each $\Psi_q$ is a morphism of Hilbert $A$-modules.
\end{proposition}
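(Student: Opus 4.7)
The plan is to check two things: (i) $\Psi_q$ is bounded, and (ii) $\Psi_q$ commutes with the fundamental Toeplitz operators $T_{z_\al}^{\cA_q}$ for $\al=1,\ldots,m$, which by the Stone–Weierstrass-style generating property of $A$ is enough for $A$-linearity.

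The first step is to organize the definition more invariantly. For $I\sub\{1,\ldots,k\}$, since $\BB_{\fj_I}^{\fb_I}$ is an intersection of the original boxes, a larger index set gives a smaller box: $I'\sub I$ implies $\BB_{\fj_I}^{\fb_I}\sub\BB_{\fj_{I'}}^{\fb_{I'}}$, and hence the Hilbert modules are nested as closed subspaces of $L^2_a(\Om_1)$:
\[
\mathcal{H}_{\fj_I}^{\fb_I}\sub\mathcal{H}_{\fj_{I'}}^{\fb_{I'}}\sub L^2_a(\Om_1).
\]
Let $P_I$ denote the orthogonal projection of $L^2_a(\Om_1)$ onto $\mathcal{H}_{\fj_I}^{\fb_I}$. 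In the definition of $\Psi_q$, the coefficient restriction $X^{f^i_{q+1}(I_{q+1})}\mapsto(X^{f^i_{q+1}(I_{q+1})})_\fn|_{\fn\in\BB_{\fj_{I_{q+1}}}^{\fb_{I_{q+1}}}}$ is exactly $P_{I_{q+1}}$ applied to $X^{f^i_{q+1}(I_{q+1})}\in\mathcal{H}_{\fj_{f^i_{q+1}(I_{q+1})}}^{\fb_{f^i_{q+1}(I_{q+1})}}$. So $\Psi_q$ decomposes as a finite alternating sum of contractions, proving boundedness with $\|\Psi_q\|\leq q+1$.

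For intertwining, it suffices to verify, for any pair of subsets $I'\sub I$ of $\{1,\ldots,k\}$, the identity
\[
T^{\fj_I,\fb_I}_{z_\al}\,P_I=P_I\,T^{\fj_{I'},\fb_{I'}}_{z_\al}\quad\text{on }\mathcal{H}_{\fj_{I'}}^{\fb_{I'}}.
\]
Indeed, applying this term-by-term to the defining formula (signs agree on both sides) shows that the $I_{q+1}$-component of $\Psi_q T_{z_\al}^{\cA_q}X$ equals that of $T_{z_\al}^{\cA_{q+1}}\Psi_qX$. To prove the identity, write $T^{\fj_I,\fb_I}_{z_\al}=P_IM_{z_\al}|_{\mathcal{H}_{\fj_I}^{\fb_I}}$ and similarly for $I'$; the identity reduces to $P_IM_{z_\al}(\mathbf{1}-P_I)=0$ on $\mathcal{H}_{\fj_{I'}}^{\fb_{I'}}$, which follows at once from the fact that the complement $(\mathcal{H}_{\fj_I}^{\fb_I})^\perp=\overline{\langle z_{j^1}^{b^1+1},\ldots,z_{j^q}^{b^q+1}\rangle}$ inside $L^2_a(\Om_1)$ is $M_{z_\al}$-invariant (being the closure of a polynomial ideal).

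There is no serious obstacle in this proof; the one subtle point is correctly identifying the coefficient-restriction map with the orthogonal projection between the nested modules $\mathcal{H}_{\fj_{I_{q+1}}}^{\fb_{I_{q+1}}}\sub\mathcal{H}_{\fj_{f^i_{q+1}(I_{q+1})}}^{\fb_{f^i_{q+1}(I_{q+1})}}$. This uses that both Hilbert modules inherit the same orthonormal basis $\{z^\fn\}$ from $L^2_a(\Om_1)$, supported on the respective boxes, so the normalization factors $\sqrt{\om_1(\fn)}$ match on the overlap. With this observation in hand the rest is purely formal.
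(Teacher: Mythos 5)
Your proof is correct, and the intertwining step takes a genuinely cleaner route than the paper's. The paper writes out the Toeplitz operators $T^{\fj_I,\fb_I}_{z_p}$ explicitly in coordinates, with the normalization ratios $\sqrt{\om_1(\cdots)/\om_1(\cdots)}$, and then asserts that the commutation with $\Psi_q$ can be checked directly on each component; you instead observe that the coefficient-restriction maps are exactly the orthogonal projections $P_{I_{q+1}}$ between the nested modules (which is valid precisely because, as you note, the normalized monomials form a common orthonormal basis), and then reduce the whole intertwining claim to $P_I M_{z_\al}(\mathbf 1-P_I)=0$ on $\cH^{\fb_{I'}}_{\fj_{I'}}$, which follows from the $M_{z_\al}$-invariance of $(\cH^{\fb_I}_{\fj_I})^\perp=\overline{\langle z_{j^1}^{b^1+1},\ldots,z_{j^q}^{b^q+1}\rangle}$. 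This is more structural and avoids all coefficient bookkeeping; the paper's computation is more explicit but says nothing extra. One small quantitative slip: your bound $\|\Psi_q\|\le q+1$ does not follow from "a sum of $q+1$ contractions," because for fixed $i$ the map $I_{q+1}\mapsto f^i_{q+1}(I_{q+1})$ is not injective (e.g.\ with $k=3$, $q=1$, $i=1$, both $\{1,3\}$ and $\{2,3\}$ map to $\{3\}$), so each summand $\Theta_i$ has norm up to $\sqrt{k-q}$ rather than $1$. The correct elementary estimate from your block-matrix picture is $\|\Psi_q\|\le\sqrt{(q+1)(k-q)}$, which matches the paper's bound $\|\Psi_q(X)\|^2\le(k-q)(q+1)\|X\|^2$. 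This affects only the constant, not the boundedness, so it is not a gap.
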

\begin{proof} 
We first verify boundedness.
For each $X=\sum_{I_q \in S_q(k)} X^{I_q}\in\cA_q$, $X^{I_q}\in \mathcal{H}_{\fj_{I_q}} ^{\fb_{I_q}}$ we defined 
\[
\Psi_q(X)=\sum_{I'_{q+1}} Y^{I'_{q+1}},\ \ 
Y^{I'_{q+1}}\in \mathcal{H}_{\fj_{I'_{q+1}}} ^{\fb_{I'_{q+1}}},\ \ 
Y^{I'_{q+1}}_{\fn}
=\begin{cases}
\sum_{i=1}^{q+1}(-1)^{i-1} X^{f^i_{q+1}(I'_{q+1})}_{\fn},&\fn\in \BB_{\fj_{I'_{q+1}}} ^{\fb_{I'_{q+1}}},\\
0,&\text{otherwise}.
\end{cases}
\]
Therefore
\begin{align*}
\left\|\Psi_q(X)\right\|^2
&=
\sum_{I'_{q+1}}\sum_{\fn\in \BB_{\fj_{I'_{q+1}}} ^{\fb_{I'_{q+1}}}} \left|\sum_{i=1}^{q+1}(-1)^{i-1}  X^{f^i_{q+1}\left(I'_{q+1}\right)}_{\fn}\right|^2
\leq \sum_{I'_{q+1}}\sum_{\fn\in \BB_{\fj_{I'_{q+1}}} ^{\fb_{I'_{q+1}}}} (q+1)\left|X^{f^i_{q+1}(I'_{q+1})}_{\fn}\right|^2\\
&\leq \sum_{I'_{q+1}}\sum_{\fn\in \BB_{\fj_{I_{q}}} ^{\fb_{I_q}} } (q+1)\left|X^{I_{q}}_{\fn}\right|^2,\quad
	\text{since}\ \BB_{\fj_{I'_{q+1}}} ^{\fb_{I'_{q+1}}}\subseteq \BB_{\fj_{I_{q}}} ^{\fb_{I_q}}\\
&\leq
(k-q)(q+1) \sum_{I\in S_q(k)}\sum_{\fn\in \BB_{\fj_{I_{q}}} ^{\fb_{I_q}} } \left|X^{I_{q}}_{\fn}\right|^2
=(k-q)(q+1)\|X\|^2.
\end{align*}
The last inequality is because every $I_q$ is contained in at most $k-q$ number of $I'_{q+1}$.

Next, we prove that $\Psi_q$ commutes with the module actions.
For each $I\in S_q(k)$ and $X^I\in \mathcal{H}^{\fb_I} _{\fj_I}$, we defined 
{\small
\[
\Psi_q(X^I)
=\sum_{1\leq s\leq k,\  s\notin I} (-1)^{\text{sign}(I, s)} Y^{I\cup \{ s\}},\quad
Y^{I\cup \{s\}}\in \mathcal{H}^{\fb_{I\cup \{s\}}} _{\fj_{I\cup \{s\}}},\quad
Y^{I\cup \{s\}}_{\fn}
=\begin{cases}
X^I_{\fn},& \fn\in \BB^{\fb_{I\cup \{s\}}}_{\fj_{I\cup \{s\}}},\\
0,&\text{otherwise},
\end{cases}
\]}
where $s$ is the $\alpha$-th smallest number in $I\cup \{s\}$, and sign$(I,s)=\alpha-1$.
	
Each $z_p$ action on $\mathcal{H}_{\fj_I} ^{\fb_I}$ is implemented by
	\[
	T^{\fj_I, \fb_I}_{z_p}\left(X^I\right)_{n_1\cdots n_p+1 \cdots n_m}
	=\left\{\begin{array}{ll}\sqrt{\frac{\omega_1(n_1\cdots n_p+1\cdots n_m)}{\omega_1(n_1\cdots n_m)}} X^I_{n_1\cdots n_p\cdots n_m},& p\notin \fj_I,\\
	\sqrt{\frac{\omega_1(n_1\cdots n_p+1 \cdots n_m)}{\omega_1(n_1\cdots n_m)}} X^I_{n_1\cdots n_p\cdots n_m},& p=j^s\in \fj_I,\  n_p+1\leq b^s,\\
	0,&\text{otherwise}.
	\end{array}\right.
	\]
This shows that $T^{\fj_I, \fb_I}_{z_p}$ preserves the component $\mathcal{H}_{\fj_I} ^{\fb_I}$. Similarly, the $z_p$ action on $\mathcal{H}^{\fb_{I\cup \{s\}}}_{\fj_{I\cup \{s\}}}$ is realized by
	\begin{multline*}
	T^{\fj_{I\cup\{s\}}, \fb_{I\cup \{s\}}}_{z_p}\left(Y^{I\cup \{s\}}\right)_{n_1\cdots  n_p+1\cdots n_m}\\
	=\left\{\begin{array}{ll}\sqrt{\frac{\omega_1(n_1\cdots  n_p+1 \cdots n_m)}{\omega_1(n_1\cdots n_m)}} Y^{I\cup\{s\}}_{n_1\cdots n_p\cdots n_m},& p\notin \fj_I,\ \ p\ne s,\\
	\sqrt{\frac{\omega_1(n_1\cdots n_p+1\cdots n_m)}{\omega_1(n_1\cdots n_m)}} Y^{I\cup\{s\}}_{n_1\cdots n_p\cdots n_m},& p=j^t\in \fj_{I\cup \{s\}},\  n_p+1\leq b^t,\\
	0,&\text{otherwise}.
	\end{array}\right.
	\end{multline*}
It is straightforward to directly check that on each component $\mathcal{H}^{\fb_{I\cup \{s\}}}_{\fj_{I\cup \{s\}}}$ we have
	\[
	\left(\Psi_q\left(T^{\fj_I, \fb_I}_{z_p}\left(X^I\right)\right)\right)^{I\cup \{s\}}= T^{\fj_{I\cup \{s\}}, \fb_{I\cup \{s\}}}_{z_p}\left(\Psi_q\left( X^I\right)^{I\cup \{ s\}}\right),
	\]
and we are done.
\end{proof}

\begin{proposition}
$\overline{I}=\ker(\Psi_0)$.
\end{proposition}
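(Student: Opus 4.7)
The plan is to show that both $\overline{I}$ and $\ker(\Psi_0)$ coincide with the closed linear span of those orthonormal monomials $z^{\fn}/\sqrt{\omega_1(\fn)}$ whose multi-indices $\fn$ lie outside the complementary set $\mathsf{C}(I)$, i.e.\ exactly those monomials which belong to $I$.

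First I would unpack $\ker(\Psi_0)$ using the explicit formula of Section~\ref{subsec:morphism}. Since $S_1(k)=\{1,\ldots,k\}$, the codomain is $\cA_1 = \bigoplus_{i=1}^k \mathcal{H}_{\fj_i}^{\fb_i}$, and $\Psi_0$ simply sends $X=\sum_{\fn\in\bN^m} X_{\fn} z^{\fn}\in L^2_a(\Om_1)$ to the tuple whose $i$-th component has Taylor coefficient $X_{\fn}$ for $\fn\in\BB_{\fj_i}^{\fb_i}$ and zero otherwise. Hence $\Psi_0(X)=0$ iff $X_{\fn}=0$ for every $\fn\in\bigcup_{i=1}^k \BB_{\fj_i}^{\fb_i}$. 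By the symbolic computation at the end of Section~\ref{2-resolution} (applied to the chosen box covering (\ref{some})), this union equals $\mathsf{C}(I)$, so $\ker(\Psi_0) = \{X\in L^2_a(\Om_1) : X_{\fn}=0 \text{ for all } \fn\in \mathsf{C}(I)\}$.

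Next I would identify $\overline{I}$ with the same set. Because $I$ is a monomial ideal, it is spanned as a complex vector space by the monomials it contains, namely by $\{z^{\fn} : \fn\in\bN^m\setminus\mathsf{C}(I)\}$ (cited from \cite[Theorem 1.1.2]{hh} in the paper). By Section~\ref{onb} the normalized monomials form an orthonormal basis of $L^2_a(\Om_1)$, so the closure in $L^2_a(\Om_1)$ of this subspan is precisely the closed Hilbert-subspace of functions whose Fourier coefficients $X_{\fn}$ vanish for $\fn\in\mathsf{C}(I)$. Combining the two descriptions yields $\overline{I}=\ker(\Psi_0)$.

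There is essentially no obstacle: the statement is a direct bookkeeping consequence of the monomial-basis orthogonality established in Proposition~\ref{lemma2}, the combinatorial identity $\mathsf{C}(I)=\bigcup_i \BB_{\fj_i}^{\fb_i}$ verified in Section~\ref{2-resolution}, and the trivial structure of $\Psi_0$ as a coefficient-restriction map. The only mild point requiring care is ensuring that the box covering fixed in (\ref{some}) indeed exhausts $\mathsf{C}(I)$, which follows from the construction in Section~\ref{2-resolution} where the boxes $\BB_{\fj_{\fs}}^{\fb_{\fs}}$ indexed by $\fs\in S(\alpha_1,\ldots,\alpha_l)$ cover $\mathsf{C}(I)$ and the family (\ref{some}) is taken to contain these.
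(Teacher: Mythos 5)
Your proposal is correct and follows essentially the same route as the paper: both reduce the statement to the facts that a monomial ideal is spanned by the monomials it contains, that the chosen boxes cover $\mathsf{C}(I)$, and that the normalized monomials form an orthonormal basis, so that both $\overline{I}$ and $\ker(\Psi_0)$ are the closed span of the basis monomials with exponents outside $\mathsf{C}(I)$. The paper phrases the inclusion $\ker(\Psi_0)\subseteq\overline{I}$ via truncations $f_M\to f$ rather than invoking the closed-span characterization directly, but this is the same argument in different packaging.
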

\begin{proof}
If $f\in I$, then $f$ has no nonzero component in any of the boxes $\BB_{\fj_{\mathfrak{s}}}^{\fb_{\mathfrak{s}}}$, $\mathfrak{s}\in S(\alpha_1, \ldots, \alpha_l)$, hence $f\in \ker(\Psi_0)$.
This shows that $\overline{I}\sub\ker(\Psi_0)$.
For the other direction, assume $f=\sum_{\fn\in\bN^m} f_{\fn}z^{\fn}\in\ker(\Psi_0)$.
Since $\Psi_0(f)=0$, it follows that $f_{\fn}=0$ for every $i=1, \ldots, k$ and $\fn\in \BB_{\fj_i}^{\fb_i}$.
Let $f_M$, $M=1,2,\ldots$ be the truncation of the Taylor expansion of $f$ by requiring $n^1, \ldots, n^m<M$.
$f_M$ has no component in the  boxes $\BB_{\fj_1}^{\fb_1},\ldots, \BB_{\fj_k}^{\fb_k}$, hence $f_M\in I$.
Therefore, $f=\lim f_M\in\overline{I}$.
\end{proof}

\begin{proposition}\label{prop:kernel} $\operatorname{Im}(\Psi_{q-1})\subseteq \ker(\Psi_q)$ for every $q=1,\ldots,k$.
\end{proposition}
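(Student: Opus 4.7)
The plan is to establish the standard combinatorial ``$d^2=0$'' identity $\Psi_q\circ\Psi_{q-1}=0$, in the spirit of the differential of a \v{C}ech or simplicial complex. Fixing $X\in\cA_{q-1}$, a shuffle $I_{q+1}\in S_{q+1}(k)$, and a multi-index $\fn\in\BB_{\fj_{I_{q+1}}}^{\fb_{I_{q+1}}}$, I would apply the definition of $\Psi_q$ to $\Psi_{q-1}(X)$ and then expand each component $\Psi_{q-1}(X)^{f^i_{q+1}(I_{q+1})}_{\fn}$ via the definition of $\Psi_{q-1}$, arriving at the double sum
\[
\Psi_q(\Psi_{q-1}(X))^{I_{q+1}}_{\fn}=\sum_{i=1}^{q+1}\sum_{j=1}^{q}(-1)^{i+j}\,X^{f^j_q(f^i_{q+1}(I_{q+1}))}_{\fn}.
\]
A small preliminary point: the inner expansion is legitimate at $\fn$ because larger subsets of $\{1,\ldots,k\}$ give smaller intersection boxes, so $f^i_{q+1}(I_{q+1})\subseteq I_{q+1}$ implies $\BB_{\fj_{I_{q+1}}}^{\fb_{I_{q+1}}}\subseteq\BB_{\fj_{f^i_{q+1}(I_{q+1})}}^{\fb_{f^i_{q+1}(I_{q+1})}}$.

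The main step is to verify that this double sum vanishes by a pairwise cancellation. Writing $I_{q+1}=\{s_1<\cdots<s_{q+1}\}$ and unpacking $f^j_q\circ f^i_{q+1}$ as ``drop the $i$-th smallest, then the $j$-th smallest'' from the resulting ordered set, I would split the sum along $\{j<i\}$ and $\{j\ge i\}$. Reindexing the first range by $(a,b)=(j,i)$ and the second by $(a,b)=(i,j+1)$ shows that both ranges biject onto the set of pairs $\{a<b\}\subseteq\{1,\ldots,q+1\}$, each producing a term of the form $X^{I_{q+1}\setminus\{s_a,s_b\}}_{\fn}$ with respective signs $(-1)^{a+b}$ and $(-1)^{a+b-1}$. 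The contributions therefore cancel pair by pair, and the total is zero.

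This yields $\Psi_q(\Psi_{q-1}(X))^{I_{q+1}}_{\fn}=0$ for every $I_{q+1}$ and every $\fn$ in the relevant box, hence $\Psi_q\circ\Psi_{q-1}=0$, which is the asserted inclusion $\operatorname{Im}(\Psi_{q-1})\subseteq\ker(\Psi_q)$. The argument is entirely combinatorial and presents no serious obstacle; what requires care is only the sign bookkeeping under the two reindexings and the check that the box containments let us apply the formula for $\Psi_{q-1}$ wherever needed.
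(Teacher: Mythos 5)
Your proof is correct and takes essentially the same route as the paper's: both expand $\Psi_q\circ\Psi_{q-1}$ into a double sum and cancel terms pairwise via the standard simplicial sign identity (you index by the positions dropped from $I_{q+1}$, while the paper indexes by the elements $s,t$ adjoined to $I\in S_{q-1}(k)$ and checks $\mathrm{sign}(I,s)=\mathrm{sign}(I\cup\{t\},s)$, $\mathrm{sign}(I\cup\{s\},t)=\mathrm{sign}(I,t)+1$ for $s<t$ --- the same bookkeeping in dual form). Your explicit remark that $\BB_{\fj_{I_{q+1}}}^{\fb_{I_{q+1}}}\subseteq\BB_{\fj_{f^i_{q+1}(I_{q+1})}}^{\fb_{f^i_{q+1}(I_{q+1})}}$ legitimizes the inner expansion and matches what the paper does implicitly.
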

\begin{proof}
For each $I\in S_{q-1}(k)$ and $X^I\in \mathcal{H}^{\fb_I}_{\fj_I}$, the image of $X^I$ under $\Psi_{q-1}$ is of the form 
	\[
	\sum_{1\leq s\leq k,\ s\notin I} (-1)^{\text{sign}(I, s)} Y^{I\cup \{ s\}},
	\]
	where $Y^{I\cup \{s\}}\in \mathcal{H}^{\fb_{I\cup \{s\}}} _{\fj_{I\cup \{s\}}}$, $s$ is the $\alpha$-th smallest number in $I\cup \{s\}$, sign$(I,s)=\alpha-1$, and the function $Y^{I\cup \{s\}}$ is given by 
	\[
	Y^{I\cup \{s\}}_{\fn}=\left\{\begin{array}{ll} X^I_{\fn},& \fn\in \BB^{\fb_{I\cup \{s\}}}_{\fj_{I\cup \{s\}}},\\ 
	0,&\text{otherwise}.
	\end{array}\right.
	\]
Similarly, the image of $Y^{I\cup \{s\}}$ under $\Psi_q$ is of the form 
	\[
	\sum_{1\leq t\leq k,\ t\notin I\cup \{s\}} (-1)^{\text{sign}(I\cup \{s\},t)} Z^{I\cup \{ s,t\}},
	\]
	where $Z^{I\cup \{s,t\}}\in \mathcal{H}^{\fb_{I\cup \{s,t\}}} _{\fj_{I\cup \{s,t\}}}$, $t$ is the $\beta$-th smallest number in $I\cup \{s, t\}$, sign$(I\cup\{s\}, t)=\beta-1$, and the function $Z^{I\cup \{s,t\}}$ is given by 
	\[
	Z^{I\cup \{s,t\}}_{\fn}=\left\{\begin{array}{ll} Y^{I\cup \{s\}}_{\fn},& \fn\in \BB^{\fb_{I\cup \{s,t\}}}_{\fj_{I\cup \{s,t\}}},\\ 
	0,&\text{otherwise}.
	\end{array}\right.
	\]
	Therefore
	\begin{multline*}
	\Psi_q\left(\Psi_{q-1}\left(X^I\right)\right)
=\sum_{1\leq s\ne t\leq k, \ s,t\notin I} (-1)^{\text{sign}(I, s)+\text{sign}(I\cup \{s\}, t)} Z^{I\cup \{ s,t\}}\\
	=\sum_{1\leq s<t\leq k, \ s,t\notin I} \Big((-1)^{\text{sign}(I, s)+\text{sign}(I\cup \{s\}, t)}+(-1)^{\text{sign}(I,t)+\text{sign}(I\cup \{t\}, s)} \Big)Z^{I\cup \{s,t\}}.
	\end{multline*}
Every summand in the latter sum vanishes because 
\[
\text{sign}(I, s)=\text{sign}(I\cup \{t\}, s),\quad
\text{sign}(I\cup \{s\}, t)=\text{sign}(I,t)+1
\]
when $s<t$.
\end{proof}

\begin{proposition}\label{special}
(a)
$\im(\Psi_0)\supseteq\ker(\Psi_1)$.

(b)
$\operatorname{Im}(\Psi_{q-1})\supseteq \ker(\Psi_q)$ for every $q=1, \ldots, k$.
\end{proposition}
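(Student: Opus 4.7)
The plan is to work Taylor coefficient-by-Taylor coefficient in the monomial basis $\{z^{\fn}\}$, exploiting the fact that at each multi-index $\fn \in \bN^m$ the complex $(\cA_\bullet, \Psi_\bullet)$ restricts to the augmented simplicial cochain complex of a contractible simplex. The same construction will handle (a) and (b) uniformly, with (a) being the case $q=1$ after interpreting $\cA_0 = L^2_a(\Om_1)$ as the summand indexed by the empty shuffle.

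Fix $q \geq 1$ and $X = \sum_{I \in S_q(k)} X^I \in \ker \Psi_q$. For each $\fn \in \bN^m$ set
\[
S(\fn) := \left\{i \in \{1, \ldots, k\} : \fn \in \BB_{\fj_i}^{\fb_i}\right\}.
\]
Since $\BB_{\fj_I}^{\fb_I} = \bigcap_{i \in I} \BB_{\fj_i}^{\fb_i}$, the Taylor coefficient $X^I_\fn$ vanishes unless $I \subseteq S(\fn)$, so the family $(X^I_\fn)_{I \subseteq S(\fn),\, |I| = q}$ is a simplicial $q$-cochain on the full simplex with vertex set $S(\fn)$, and the formula defining $\Psi_q$ in Section \ref{subsec:morphism} is precisely the simplicial coboundary on this simplex. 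The hypothesis $\Psi_q X = 0$ is then the cocycle condition at every $\fn$; for $\fn \notin \mathsf{C}(I)$ every $X^I_\fn$ already vanishes, so nothing needs lifting there.

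Next, construct a lift using the standard cone-point chain homotopy. For each $\fn$ with $S(\fn) \neq \emptyset$ set $v(\fn) := \min S(\fn)$ and define $Y \in \cA_{q-1}$ componentwise by
\[
Y^J_\fn :=
\begin{cases}
X^{J \cup \{v(\fn)\}}_\fn, & \text{if } J \subseteq S(\fn) \text{ and } v(\fn) \notin J, \\
0, & \text{otherwise}.
\end{cases}
\]
(For $q=1$ one takes $J = \emptyset$, which recovers a single element of $L^2_a(\Om_1) = \cA_0$ and gives the lift required by part (a); in fact the cocycle condition at $q=1$ directly says $X^i_\fn$ is independent of $i \in S(\fn)$.) Because $v(\fn)$ is by construction the smallest element of $J \cup \{v(\fn)\}$ whenever $Y^J_\fn \neq 0$, all chain-homotopy signs reduce to $+1$. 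A direct expansion of $(\Psi_{q-1} Y)^I_\fn$ now splits into two cases according to whether $v(\fn) \in I$ (only the $i=1$ summand survives and returns $X^I_\fn$) or $v(\fn) \notin I$ (all $q$ summands survive and $X^I_\fn$ is recovered from the cocycle relation $\Psi_q(X)^{I \cup \{v(\fn)\}}_\fn = 0$). This yields $\Psi_{q-1}(Y) = X$.

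The pointwise bound $|Y^J_\fn|^2 \leq |X^{J \cup \{v(\fn)\}}_\fn|^2$, together with the injectivity of $(J, \fn) \mapsto (J \cup \{v(\fn)\}, \fn)$ onto pairs $(I, \fn)$ with $I \subseteq S(\fn)$ and $v(\fn) \in I$, gives $\|Y\|^2 \leq \|X\|^2$, so $Y$ genuinely lies in $\cA_{q-1}$. The main obstacle I anticipate is exactly this uniform $L^2$ control: at each individual $\fn$ simplex contractibility trivially supplies \emph{some} pointwise preimage, but with no canonical choice there is no reason for the assembled family to be square-summable across $\fn$. The minimum-index cone-point formula pins down the choice and supplies an explicit, uniformly bounded chain contraction, so the norm estimate passes transparently through the coefficient sum.
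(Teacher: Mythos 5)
Your proof is correct, and for part (b) it takes a genuinely different route from the paper. The paper proves (a) exactly as you do (the $q=1$ cocycle condition says the components $X^i_{\fn}$ agree for all $i$ with $\fn\in\BB_{\fj_i}^{\fb_i}$, so they glue to a single $\xi\in\cA_0$), but for (b) it argues by induction on the number $k$ of boxes: it splits the box collection into the first $k-1$ boxes and their intersections with the last one, identifies $\cA_q\cong\cA^1_q\oplus\cA^2_{q-1}$ with $\Psi_q$ lower-triangular (a mapping-cone decomposition), and runs a three-case diagram chase using the inductive hypothesis on both sub-complexes. You instead observe that at each fixed $\fn$ the complex is the simplicial cochain complex of the full simplex on $S(\fn)=\{i:\fn\in\BB_{\fj_i}^{\fb_i}\}$ and write down the explicit cone contraction $Y^J_{\fn}=X^{J\cup\{v(\fn)\}}_{\fn}$ based at $v(\fn)=\min S(\fn)$; the choice of the minimal vertex kills all signs and, crucially, makes the assembled lift satisfy the uniform bound $\|Y\|\le\|X\|$ via the injectivity of $(J,\fn)\mapsto(J\cup\{v(\fn)\},\fn)$. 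I checked the two cases of your verification ($v(\fn)\in I$, where only the $i=1$ term survives, and $v(\fn)\notin I$, where the cocycle relation at $I\cup\{v(\fn)\}$ — a legitimate index since $q<k$ forces $|I\cup\{v(\fn)\}|\le k$, and the case $q=k$ never lands here because then $v(\fn)\in I=\{1,\dots,k\}$) and both are sound. Your argument is shorter, avoids the induction entirely, and yields the stronger quantitative statement that the resolution admits a contracting homotopy of norm at most $1$; the paper's inductive scheme is heavier but is the kind of Mayer--Vietoris argument one could hope to adapt when no canonical pointwise contraction exists.
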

\begin{proof}
(a)
Assume $X:=(X^1,\ldots,X^p)\in\mathrm{ker}(\Psi_1)$.
Consider $\xi\in\cA_0$ given by
	\[
	\xi_{\fn}:=\begin{cases}
	X^s_{\fn},&\text{there is}\ s\ \text{such that} \ \fn\in\BB_{\fj_s}^{\fb_s},\\
	0,&\text{otherwise}.
	\end{cases}
	\]
This is well-defined because $\Psi_1(\xi)=0$.
Note that $\xi\in\cA_0$ because $\|\xi\|^2=\|X^1\|^2+\cdots+\|X^p\|^2$.
Clearly, $\Psi_0(\xi)=X$.

(b)
We apply induction on $k$.  
When $k=1$, the map $\Psi_0: \cA_0\to \cA_1$ is surjective because computing with the orthonormal basis shows that $\cA_1$ can be identified with a closed subspace of $\cA_0=L^2_a(\Om_1)$, with $\Psi_0$ being the corresponding orthogonal projection.
Assuming
	\[
	\operatorname{Im}(\Psi_{q-1})\supseteq \ker(\Psi_q),\quad  q=1, \ldots, k,\quad 1\leq k<p,
	\]
we prove the statement for $k=p$. 
The case $q=1$ is proved in (a), so from now on we assume $2\leq q\leq k$.

Consider the following two collections of $p-1$ boxes:
	
\begin{itemize}
\item 
The first $p-1$ boxes: $\BB_{\fj_1}^{\fb_1}, \ldots, \BB_{\fj_{p-1}}^{\fb_{p-1}}$.

\noindent
Applying the construction in Section \ref{2-resolution} to these boxes, we get the Hilbert modules $\cA^1_s$ together with the Hilbert module maps $\Psi^1_s: \cA^1_s\to \cA^1_{s+1}$, $s=1,\ldots, p-2$. Set $\cA^1_p:=\{0\}$ and $\Psi^1_{p-1}:=0$.
		
\item 
The intersection of the first $p-1$ boxes with the last one: $\BB_{\fj_{1p}}^{\fb_{1p}}, \ldots, \BB_{\fj_{p-1p}}^{\fb_{p-1p}}$.

\noindent
Applying the construction in Section \ref{2-resolution} to these boxes, we get the Hilbert modules $\cA^2_s$ together with the Hilbert module maps $\Psi^2_s: \cA^2_s\to \cA^2_{s+1}$, $s=1,\ldots, p-2$. Set $\cA^2_p:=\{0\}$ and $\Psi^2_{p-1}:=0$.
\end{itemize}

By the induction assumption we have 
	\[
	\operatorname{Im}\left(\Psi^1_{q-1}\right)\supseteq \ker\left(\Psi^1_q\right),\quad \operatorname{Im}\left(\Psi^2_{q-1}\right)\supseteq \ker\left(\Psi^2_q\right),\quad q=1, \ldots, p-1.
	\]
	
	Define a map $\Phi_s: \cA^1_{s}\to \cA^2_{s}$ by 
	\[
	\Phi_s(X^I)=Y^{I\cup \{p\}},\quad I\in S_s(p-1),
	\]
	where $Y^{I\cup \{p\}}$ denotes the component corresponding to the intersection of the boxes $\BB_{\fj_{i_1p}}^{\fb_{i_1p}}$, $\ldots$, $\BB_{\fj_{i_{s}p}}^{\fb_{i_sp}}$, given by
	\[
	Y^{I\cup \{p\}}_{\fn}:=\left\{\begin{array}{ll} (-1)^{t}X^I_{\fn},& \fn\in \BB_{\fj_{I\cup\{p\}}} ^{\fb_{I\cup \{p\}}},\\ 0,&\text{otherwise}.\end{array}\right.
	\]
Similar to the proof of Proposition \ref{prop:module-morphism}, $\Phi_s$ is an $A$-module map.
Furthermore, we can easily check that
	\begin{itemize}
		\item 
		$\cA_q=\cA^1_{q}\oplus \cA^2_{q-1}$ for $q=2,\ldots, p$.
		
		\item 
		$\Psi_q=\left(\begin{array}{cc} \Psi^1_q&0\\ \Phi_q& \Psi^2_{q-1}\end{array}\right)$ for $q=2, \ldots, p-1$.  
	\end{itemize}
These identifications are used below to prove that $\text{Im}(\Psi_{q-1})\supseteq \ker(\Psi_{q})$.
We split the proof into three cases. 
	\begin{enumerate}[leftmargin=*]
		\item
		$q=2$.

\noindent
Suppose $(X_1, X_2)\in \cA^1_2\oplus \cA^2_1=\cA_2$ is in $\ker(\Psi_2)$. 
By the identification above for $\Psi_q$, we have 
		\[
		\Psi^1_2(X_1)=0,\qquad \Phi_2(X_1)+\Psi^2_1(X_2)=0. 
		\]
By the induction assumption, we have $\ker(\Psi^1_2)\subseteq \text{Im}(\Psi^1_1)$, so there exists $Y_1\in \cA^1_1$ such that $\Psi^1_1(Y_1)=X_1$.  
By Proposition \ref{prop:kernel}, for the morphism $\Psi_\bullet$, we have
		{\small 
			\begin{align*}
			(0,0)
			&=
			\Psi_2\big(\Psi_1(Y_1,0)\big)=\Psi_2\big(\Psi^1_1(Y_1), \Phi_1(Y_1)\big)\\
			&=
			\Big(\Psi^1_2\big(\Psi^1_1(Y_1)\big), \Phi_2\big(\Psi^1_1(Y_1)\big)+\Psi^2_1\big(\Phi_1(Y_1)\big)\Big),\quad \Psi^1_1(Y_1)=X_1, \ \Psi^1_2\big(\Psi^1_1(Y_1)\big)=0\\
			&=\Big(0, \Phi_2(X_1)+\Psi^2_1\big(\Phi_1(Y_1)\big) \Big).
			\end{align*}
}Therefore, $\Phi_2(X_1)+\Psi^2_1\big(\Phi_1(Y_1)\big)=0$. 
Setting $X_2':=X_2-\Phi_1(Y_1)$, we have
\[
\Psi^2_1(X_2')
=\Psi^2_1(X_2)-\Psi^2_1(\Phi_1(Y_1))=\Psi^2_1(X_2)+\Phi_2(X_1)=0,
\] 
because $0=\Psi_2(X_1, X_2)=(\Psi^1_2(X_1), \Phi_2(X_1)+\Psi^2_1(X_2))$.
Since $\Psi^2_1(X_2')=0$, it follows that the following assignment is well-defined:
		\[
		(Y_2)_{\fn}:=\left\{\begin{array}{ll}({X'_2}^{ip})_{\fn},&\fn\in \BB^{\fb_{ip}}_{\fj_{ip}}\ \text{for some }\ i=1,..., p-1,\\ 0,&\text{otherwise}. \end{array}\right.
		\]
Arguments similar to the proof of Proposition \ref{special} show that this assignment gives $Y_2\in \mathcal{H}_{\fj_p}^{\fb_p}$ such that $\Psi^2_0(Y_2)=X_2'$. 
In summary, we have found $(Y_1, Y_2)\in \cA_1=\cA^1_1\oplus\mathcal{H}_{\fj_p}^{\fb_p}$ which satisfies 
		\[
		\Psi_1(Y_1, Y_2)=\left(\Psi^1_1(Y_1), \Phi_1(Y_1)+\Psi_0^2(Y_2)\right)=\left(X_1, \Phi_1(Y_1)+X_2'\right)=(X_1, X_2).
		\]

		\item
		$q=3,\ldots, p-1$.

\noindent
Suppose $(X_1, X_2)\in \cA^1_{q}\oplus \cA^2_{q-1}=\cA_q$ is in $\ker(\Psi_q)$.  
		By the identification above for $\Psi_q$, we have 
		\[
		\Psi^1_q(X_1)=0,\qquad \Phi_q(X_1)+\Psi^2_{q-1}(X_2)=0. 
		\]
		Since $\operatorname{Im}(\Psi^1_{q-1})\supseteq \ker(\Psi^1_q)$, there exists $Y_1\in \cA^1_{q-1}$ such that $X_1=\Psi^1_{q-1}(Y_1)$. 
		Since $\Psi_q(\Psi_{q-1}(Y_1, 0))=0$, it follows that $\Phi_q(X_1)+\Psi^2_{q-1}(\Phi_{q-1}(Y_1))=0$.
		Therefore
		\[
		\Psi^2_{q-1}\left(X_2-\Phi_{q-1}(Y_1)\right)=0.
		\]
				Since $\operatorname{Im}\left(\Psi^2_{q-2}\right)\supseteq \ker\left(\Psi^2_{q-1}\right)$, there exists $Y_2\in \cA^2_{q-2}$ such that $\Psi^2_{q-2}(Y_2)=X_2-\Phi_{q-1}(Y_1)$.
In summary, we have found $(Y_1, Y_2)\in \cA_q$ which satisfies 
		\[
		\Psi_{q-1}(Y_1, Y_2)=\left(\Psi^1_{q-1}(Y_1), \Phi_{q-1}(Y_1)+\Psi_{q-2}(Y_2)\right)=(X_1, X_2). 
		\]

		\item
		$q=p$.
		
\noindent
Since $\Psi^2_{p-2}: \cA^2_{p-2}\to \cA^2_{p-1}$ is surjective, it follows that 
\[
\Psi_{p-1}: \left(\cA_{p-1}=\cA^1_{p-1}\oplus \cA^2_{p-2}\right)\ra\left(\cA_{p}=\cA^2_{p-1}\right)
\]is also surjective. 
	\end{enumerate}
	All cases are exhausted.
\end{proof}

\subsection{The proof of Theorem \ref{theorem2}.(b)}
To deduce the index formula in Theorem \ref{theorem2}.(b) from the resolution in Theorem \ref{theorem2}.(a), we need the following proposition.

\begin{proposition}\label{prop:isometry}
Let $0\ra \cM_1\ra\cM_2\ra\cM_3\ra 0$ be a short exact sequence of essentially normal Hilbert $A$-modules and Hilbert $A$-module maps between them.
Suppose that the essential spectra of $\cM_i$, $i=1,2,3$ is contained in $\overline{\Om_1}$, and let $\al_i: C(\overline{\Om_1})\to Q(\cM_i)$ be the $\ast$-representation of $C(\overline{\Om_1})$ on the Calkin algebra $Q(\cM_i)=B(\cM_i)/\fK(\cM_i)$ induced by the essential normality of $\cM_i$.

(a)
There are co-isometries $U: \cM_2\to \cM_1$ and $V: \cM_2\to \cM_3$ such that $UV^*=0=VU^*$, $U^*U+V^*V=1$, and they commute with $A$-module structures up to compact operators in the sense that $[U]\alpha_2 [U]^*=\alpha_1$ and $[V]\alpha_2[V]^*=\alpha_3$, where $\al_i(p)=[T^i_p]\in Q(\cM_i)$, $p\in A$ is the equivalence class of the multiplication operator $T^i_p\in B(\cM_i)$.

(b)
We have $[\alpha_2]=[\alpha_1]+[\alpha_3]$ in $K_1\left(\es^2\right)$, where $[\alpha_1]$ and $[\alpha_3]$ are identified as classes in $K_1(\es^2)$ by the co-isometries $U$ and $V$. 
\end{proposition}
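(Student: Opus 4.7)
My plan is to set up an orthogonal decomposition of $\cM_2$ adapted to the extension, produce the co-isometries from that decomposition, and then translate the resulting block-matrix picture into a statement in $K_1$.

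For part (a), observe first that the injective module map $\cM_1 \to \cM_2$ has closed range (it equals the kernel of the bounded surjection onto $\cM_3$), so by Proposition \ref{fact}.(b) I may replace $\cM_1$ with its image and assume $\cM_1 \subset \cM_2$ is a closed $A$-submodule. Let $U \colon \cM_2 \to \cM_1$ be the orthogonal projection; then $U^*$ is the inclusion, $UU^* = I_{\cM_1}$, and $U$ is a co-isometry. Set $\cN := \cM_1^\perp$ and let $\pi \colon \cN \to \cM_3$ be the restriction of the quotient map $\cM_2 \to \cM_3$. By exactness and the open mapping theorem, $\pi$ is a bijective bounded $A$-module map; polar-decompose $\pi = V_0 |\pi|$ with $V_0 \colon \cN \to \cM_3$ a Hilbert space unitary and $|\pi| := (\pi^* \pi)^{1/2}$ positive and invertible on $\cN$. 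Set $V := V_0 \circ Q$, where $Q \colon \cM_2 \to \cN$ is the orthogonal projection. The algebraic identities $UV^* = 0 = VU^*$ and $U^*U + V^*V = I$ are then immediate from the orthogonal decomposition $\cM_2 = \cM_1 \oplus \cN$.

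Next I verify the intertwining up to compacts required in (a). For $U$: Proposition \ref{fact}.(a), applied to the invariant submodule $\cM_1 \subset \cM_2$, gives $[T^2_p, U^*U]$ compact for every $p \in A$; conjugating by $U$ yields $[U] \alpha_2(p) [U]^* = \alpha_1(p)$ in $Q(\cM_1)$. For $V$: the same observation applied to $Q$ produces a compression representation $\alpha_\cN$ on $\cN$, and it remains to identify $\alpha_\cN$ with $\alpha_3$ through $V_0$. Since $\pi$ is an exact module intertwiner between $T^\cN_p$ and $T^3_p$, a Fuglede--Putnam argument applied to the induced commutative $*$-representations $\alpha_\cN$ and $\alpha_3$ inside the Calkin algebras forces $\pi^*$ to intertwine modulo compacts as well. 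Consequently $\pi^* \pi$ commutes with $T^\cN_p$ modulo compacts, and by continuous functional calculus the same holds for $|\pi|$; then $V_0 = \pi \, |\pi|^{-1}$ inherits intertwining modulo compacts, giving $[V] \alpha_2(p) [V]^* = \alpha_3(p)$.

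For part (b), assemble $U$ and $V$ into
\[
W := \begin{pmatrix} U \\ V \end{pmatrix} \colon \cM_2 \longrightarrow \cM_1 \oplus \cM_3,
\]
which is a Hilbert space unitary by the relations established in (a). Conjugating $T^2_p$ by $W$ produces a $2 \times 2$ matrix whose diagonal entries $UT^2_p U^*$ and $VT^2_p V^*$ agree with $T^1_p$ and $T^3_p$ modulo $\fK$, and whose off-diagonal entries $UT^2_p V^*$ and $VT^2_p U^*$ are compact (combine $UV^* = 0 = VU^*$ with the compact-perturbation intertwiners just established). Hence $W$ implements a unitary equivalence modulo $\fK$ between $\alpha_2$ and $\alpha_1 \oplus \alpha_3$, and since the direct sum of extensions is precisely the addition law in the odd BDF $K$-homology group $K_1(\es^2)$, this yields $[\alpha_2] = [\alpha_1] + [\alpha_3]$. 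The only nontrivial step in the plan is promoting $V_0$ to a map that intertwines the $A$-module actions \emph{modulo compacts}; this requires the Fuglede--Putnam argument for essentially normal Hilbert modules and is where the essential-normality hypothesis on all three of $\cM_1, \cM_2, \cM_3$ is actually used. Everything else reduces to orthogonal-decomposition bookkeeping and the standard block-matrix description of addition in $K_1$.
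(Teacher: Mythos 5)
Your proposal is correct and follows essentially the same route as the paper: part (a) is exactly the orthogonal-decomposition/polar-decomposition/Fuglede--Putnam argument of the cited reference \cite[Proposition 3.8]{djty}, and part (b) is the same deduction from the relations $UU^*=VV^*=1$, $UV^*=VU^*=0$, $U^*U+V^*V=1$ via the direct-sum (block-matrix) addition law in BDF $K$-homology, with the classes $[\alpha_1],[\alpha_3]$ pushed into $K_1(\es^2)$ as in the paper. The only cosmetic point is that you invoke Proposition \ref{fact}.(a) for a general invariant submodule rather than for $\overline{I}\subset L^2_a(\Omega)$ as literally stated, but the paper's own proof of that proposition (and its later uses) are equally general, so this is not a gap.
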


\begin{proof} 
(a)
\cite[Proposition 3.8]{djty}. 

(b)
Set $\es^i:=\es(\cM_i)$. 
The representation $\al_i$ factors through $\ast$-monomorphism $C(\es^i)\ra Q(\cM_i)$. 
We have $\al_1=[U]\al_2[U]^*$ by (a).
The composition of $[U]\al_2[U]^*$ with $\al_1^{-1}$ is a $\ast$-homomorphism $C(\es^2)\ra C(\es^1)$, and this induces a natural map $\es^1\ra\es^2$. 
Similarly, we have a natural map $\es^3\ra\es^2$. 
Therefore, $\al_1$ and $\al_3$ induce classes $[\al_1]$ and $[\al_3]$ in $K_1\left(\es^2\right)$ by the functoriality of $K_1$.
Putting all equations
\[
UU^*=1=VV^*,\quad
UV^*=0=VU^*,\quad
U^*U+V^*V=1,
\]
\[
[U]\alpha_2 [U]^*=\alpha_1,\quad
[V]\alpha_2[V]^*=\alpha_3,
\]
together, we deduce that $[\alpha_2]=[\alpha_1]+[\alpha_3]$.
\end{proof}

\subsubsection*{The proof of Theorem \ref{theorem2}.(b)}
The idea is to decompose the resolution of $\overline{I}$ in Theorem \ref{theorem2}.(a) into short exact sequences and then apply Proposition \ref{prop:isometry}.(b).
The details follow.
Consider $\cA_q^{-}:=\operatorname{Im}(\Psi_{q-1})=\operatorname{ker}(\Psi_q)$ as a closed subspace of $\cA_q$.
Note that $\cA_k^-=\cA_k$ because $\Psi_{k-1}$ is surjective.
The morphism $\Psi_q: \cA_q\to \cA_{q+1}$ of Hilbert modules induces the short exact sequence
\begin{equation}
0\ra \cA_{q}^-\hookrightarrow \cA_q\xrightarrow{\Psi_q} \cA_{q+1}^{-}\ra 0,\quad
q=1, \ldots, k-1, \label{SESS}
\end{equation}
which, according to Propositions \ref{fact} and \ref{essnormalAq}, implies that $\cA_q^{-}$ is essentially normal.
Set $\es^{q}:=\es(\cA_q)$, and let $\alpha_q$ (respectively $\alpha_q^-$) be the $\ast$-monomorphism $C(\es^q)\ra Q(\cA_q)$ (respectively $C(\es^{q-})\ra Q(\cA_q^-)$) induced by essential normality.
Note that the essential spectra of all terms in exact sequence (\ref{SESS}) is contained in $\overline{\Om_1}$.
By Proposition \ref{prop:isometry}.(b), we have $[\alpha_q]=[\alpha_q^-]+[\alpha_{q+1}^-]$ in $K_1\left(\es^q\right)$ for every $q=1,\ldots,k-1$.
These formulas for $q=k-1$ and $q=k-2$ give
\[
[\alpha_{k-1}]=[\alpha_{k-1}^-]+[\alpha_k]\in K_1\left(\es^{k-1}\right),\quad
[\alpha_{k-2}]=[\alpha_{k-2}^-]+[\alpha_{k-1}^-]\in K_1\left(\es^{k-2}\right).
\]
Pushing forward these equations into $K_1\left(\es^{k-1}\cup \es^{k-2}\right)$ by inclusion maps $\es^{k-1}, \es^{k-2}\hookrightarrow \es^{k-1}\cup \es^{k-2} $, gives $[\alpha_{k-1}]+[\alpha_{k-2}^-]=[\alpha_k]+[\alpha_{k-2}]$.
Continuing this argument, we have
\begin{equation}
[\alpha_{1}^-]=[\alpha_1]-[\alpha_2]+\ldots+(-1)^{k-1}[\alpha_{k}]\quad
\mathrm{in}\quad
K_1(\es^1\cup \cdots \cup \es^k).\label{hope}
\end{equation} 

On the other hand, the short exact sequence
\[
0\ra \overline{I}\ra L^2_a(\Om_1)\ra \cA_1^-\ra 0
\]
establishes a natural Hilbert module isomorphism between $\cA_1^-$ and $L^2_a(\Om_1)/\overline{I}\cong I^{\perp}$, hence $\tau_I:=[I^{\perp}]=[\al_1^{-}]$ by Proposition \ref{fact}.(b). 
This, together with (\ref{hope}), gives the index formula in Theorem \ref{theorem2}.(b). Q.E.D.

\end{document}